\theoremstyle{thmstyleone}%
\newtheorem{theorem}{Theorem}
\newtheorem{lemma}{Lemma}
\newtheorem{corollary}{Corollary}
\newtheorem{proposition}[theorem]{Proposition}%
\theoremstyle{thmstyletwo}%
\newtheorem{remark}{Remark}%
\newtheorem{notation}{Notation}
\theoremstyle{thmstylethree}%
\newtheorem{definition}{Definition}%
\begin{document}

\title[Convolutional Persistence Transforms]{Convolutional Persistence Transforms}

\author*[1,3]{\fnm{Yitzchak Elchanan} \sur{Solomon}}\email{elchanansolomon@gmail.com}

\author[1,2,3]{\fnm{Paul} \sur{Bendich}}\email{paul.bendich@duke.edu}

\affil*[1]{\orgdiv{Mathematics}, \orgname{Duke University}, \orgaddress{\city{Durham}, \state{NC}, \country{USA}}}

\affil[2]{\orgname{Geometric Data Analytics}, \orgaddress{\city{Durham}, \state{NC}, \country{USA}}}

\affil[3]{\small The authors were partially supported by the Air Force Office of Scientific Research under the  grant ``Geometry and Topology for Data Analysis and Fusion",  AFOSR FA9550-18-1-0266. They would also like to extend their thanks to Alexander Wagner, for helpful conversations.}


\abstract{ In this paper, we consider topological featurizations of data defined over simplicial complexes, like images and labeled graphs, obtained by convolving this data with various filters before computing persistence. Viewing a convolution filter as a local motif, the persistence diagram of the resulting convolution describes the way the motif is distributed across the simplicial complex. This pipeline, which we call \emph{convolutional persistence}, extends the capacity of topology to observe patterns in such data. Moreover, we prove that (generically speaking) for any two labeled complexes one can find \emph{some} filter for which they produce different persistence diagrams, so that the collection of all possible convolutional persistence diagrams is an injective invariant. This is proven by showing convolutional persistence to be a special case of another topological invariant, the Persistent Homology Transform. Other advantages of convolutional persistence are improved stability, greater flexibility for data-dependent vectorizations, and reduced computational complexity for certain data types.	Additionally, we have a suite of experiments showing that convolutions greatly improve the predictive power of persistence on a host of classification tasks, even if one uses random filters and vectorizes the resulting diagrams by recording only their total persistences.}

\keywords{Persistent Homology, Topological Transform, Machine Learning, Convolutions}


\maketitle

\section{Overview}
Persistent homology is a method of assigning multiscale topological descriptors to parametric families of shapes. In \emph{functional} persistence, the object of study is a real-valued function $f:X \to \mathbb{R}$ defined over a space $X$, and the parametric family of shapes are the sublevel-sets $X_{\alpha} = \{x \in X : f(x) \leq \alpha\}$. It is similarly possible to consider superlevel-sets, which is equivalent to negating the \emph{filter function} $f$. One crucial feature of this construction is that $X_{\alpha}$ is a subset of $X_{\beta}$ for $\alpha \leq \beta$, so that the sublevel-sets are naturally nested. The output of persistent homology is a collection of intervals (equivalently, a collection of points), called a \emph{barcode} (or \emph{persistence diagram}, using the point representation). The space of barcodes is not a vector space, even approximately \citep{bubenik2020embeddings,wagner2021nonembeddability}, but there do exist multiple \emph{vectorizations} \citep{adams2017persistence,bubenik2015statistical,monod2019tropical,di2015comparing,carriere2015stable} that transform barcodes into vectors suitable for machine learning and data analysis.\\

A very general setting for functional persistence is that of \emph{simplicial complexes}, i.e. shapes obtained by gluing together points, edges, triangles, tetrahedra, etc. For example, if $X$ is a triangulation of a 2D rectangular grid, a function $f:X \to [0,1]$ can be viewed as a greyscale image. Persistent homology can then be understood as a feature extraction method for such images, either for supervised or unsupervised learning. Example applications include removing image noise \citep{chung2018topological,chung2022multi}, parameter estimation for PDEs \citep{calcina2021parameter,adams2017persistence}, segmentation \citep{hu2019topology}, flow estimation \citep{suzuki2021flow}, tumor analysis \citep{crawford2020predicting}, cell immune micronenvironment \citep{aukerman2020persistent}, and materials science \citep{hiraoka2016hierarchical}.\\

It should be understood that purely topological methods do not provide state-of-the-art predictive accuracy on most machine learning tasks, and are not to be considered as \emph{alternatives} to more general methods like neural nets and kernel methods. Rather, topological methods provide \emph{principled} and \emph{intepretable} features that are different from those extracted by other methods, and can help improve the performance and utility of the entire pipeline, especially in settings where data is limited, see \citep{khramtsova2022rethinking}.\\

To that end, it is important to understand the properties of functional persistence and their role in machine learning. Here we consider some of the most salient properties:
\begin{itemize}
	\item (Computational complexity) For a simplicial complex $K$ with $N$ simplices, computing persistence is $O(N^\omega)$, where $\omega$ is the matrix multiplication constant \citep{milosavljevic2011zigzag}. This means that such persistence calculations scale poorly in the resolution of data, especially high-dimensional data, where doubling the resolution in $\mathbb{R}^d$ results in a $2^{d}$-fold increase in the number of simplices.  Still, in many settings, persistence calculations are on average much faster than the pessimistic worst-case complexity, see e.g. \citep{giunti2021average}. 
	\item (Stability) Persistent homology is stable to small perturbations of the input data, in that the distance between the barcodes for two functions $f$ and $g$ on a common space $X$ is bounded by $\|f-g\|_{\infty}$. However, persistence is not stable to outliers, so images that look similar outside of a small fraction of pixels can produce wildly different barcodes.
	\item (Flexibility) There is a single persistence diagram to be associated with each pair $(X,f)$ of space $X$ and real-valued function $f$. This lack of additional parameters makes applying persistence straightforward, but can also be limiting in contexts where data-dependent featurizations are desired.
	\item (Invertibility) There exist many distinct space-function pairs $(X,f)$ producing identical barcodes. Thus, persistence is not invertible as feature map, and this loss of information may hinder the capacity of persistence-based methods to identify patterns or distinguish distinct images.
\end{itemize}

Our goal in this paper is to introduce a modification to persistence of certain simplicial complexes that is (1) often faster to compute, (2) more robust to outliers, (3) allows for data-driven tuning, (4) is provably invertible, and (5) is more informative in practice. Essentially, this technique consists of passing multiple convolutional filters over the data before computing persistence, and so we call it \emph{convolutional persistence}. A crucial insight in this paper is that convolutional persistence can generically be transformed into a special case of the Persistent Homology Transform (PHT) \citep{turner2014persistent} in high-dimensions, implying that it shares the theoretical properties enjoyed by that invariant.\\

To get an intuitive sense for why convolutions might be of value in applied topology, consider Figure \ref{fig:crosses}, which shows how convolution with a simple filter makes geometric information in an image more available to topological methods.\\

\begin{figure}
	\centering
	\includegraphics[scale=0.4]{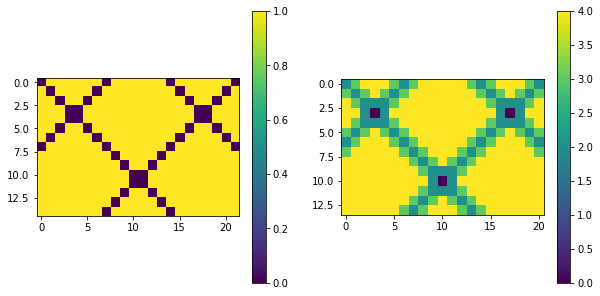}
	\caption{The original image is shown on the left. The image has interesting topological structure but the sublevel-set persistence is trivial. On the right, we show the result of convolving with a $2 \times 2$-filter with a value of $1$ in each pixel. The sublevel-set persistence of the convolved image is more informative; at height zero, there are three connected components, corresponding to three regions in the image where the convolution is zero. These regions then merge at an immediate value, due to those parts of the image where the local patches are more similar to the filter.}
	\label{fig:crosses}
\end{figure}

\subsection{Results}

The major theoretical result of the paper is that convolutional persistence is generically injective (Theorems \ref{thm:simpphtinj} and \ref{thm:thmcptim}). We also provide stability results for convolutional persistence (Propositions \ref{prop:stability} and \ref{prop:simpstability}). The experimental results of Section \ref{sec:experiments} show that convolutional persistence significantly outperforms ordinary persistence at a collection of classification tasks. Interestingly enough, random filters perform well for the classification tasks considered, even when only the total persistences of the resulting diagrams are taken as feature vectors.

\subsection{Organization}

The remainder of the paper is organized as follows. Section \ref{sec:background} provides a thorough, non-technical survey on persistent homology. Section \ref{sec:convpers} defines two versions of \emph{convolutional persistence} and reviews prior related work. Section \ref{sec:theoreticalresults} contains the main theoretical results of the paper. Section \ref{sec:experiments} compares ordinary and convolutional persistence on a host of image datasets, showing the capacity of convolutional persistence to produce features well-suited for image classification. Finally, Section \ref{sec:conclusion} discusses outstanding questions and potential generalizations of this work. Code for running experiments with convolutional persistence can be found at \href{https://github.com/yesolomon/convpers}{https://github.com/yesolomon/convpers}.

\section{Background}
\label{sec:background}
The content of this paper assumes familiarity with the concepts and tools of persistent homology. Interested readers can consult the articles of Carlsson \citep{carlsson2009topology} and Ghrist \citep{ghrist2008barcodes} and the textbooks of 
Edelsbrunner and Harer \citep{edelsbrunner2010computational} and Oudot \citep{oudot2015persistence}. We include the following primer for readers interested in a high-level, non-technical summary.

\subsection{Persistent Homology}

Persistent homology records the way topology evolves in a parametrized sequence of spaces. In the case of functional persistence, we consider an ambient space $X$ equipped with a real-valued function $f:X \to \mathbb{R}$. The sublevel-sets $X_{\alpha} = \{x \in X : f(x) \leq \alpha\}$ of $f$ are naturally nested, in that $X_{\alpha}$ is a subset of $X_{\beta}$ for $\alpha \leq \beta$, and these form a \emph{filtration} of $X$. Persistence records how the topology of this filtration evolves as a function of the parameter $\alpha$.\\

Simplicial complexes admit particularly simple filtrations. To every simplex $\sigma$ we can associate a real value $f(\sigma)$ that encodes the parameter value at which it appears in the filtration of $X$. The only restriction on the function $f$ is the following consistency condition: if $\sigma$ is a sub-simplex of a higher-dimensional simplex $\tau$ (i.e. an edge which sits at the boundary of a triangle), we must have $f(\sigma) \leq f(\tau)$, ensuring that simplices do not appear before any of their faces.\\

Given a filtered simplicial complex, as the sequence of sublevel-sets evolves, the addition of certain edges or higher-dimensional simplices alters the \emph{topological type} of the space. A precise way of quantifying topology is \emph{homology}, which measures the number of connected components (zero-dimensional homology), cycles (one-dimensional homology), or voids (higher-dimensional homology) in a space. Thus, homology can change when two connected components merge or a new cycle is formed. Simplices responsible for such topological changes are called \emph{critical}. Persistent homology records the parameter values at which critical simplices appear, notes the dimension in which the homology changes, and pairs critical values by matching the critical value at which a new homological feature appears to the critical value at which it disappears. This information is then organized into a structure called a \emph{barcode}, which is simply a collection of intervals. Figure \ref{fig:cubpers} shows the computation of the zero- and one-dimensional barcodes for a simple simplicial complex.\\

A barcode can also be encoded as a collection of points, simply by mapping each interval $[a,b]$ to the point $(a,b) \in \{(a,b) \mid a<b, \,\, a,b \in \mathbb{R} \cup \infty\}$. This collection of points is called a \emph{persistence diagram}, and besides for certain subtleties, such as dealing with open or closed endpoints and  keeping track of points with multiplicity, it contains the exact same information as the original barcode. However, for certain analyses and vectorizations of persistence features, persistence diagrams are more useful than barcodes, as will be shown below.\\

\begin{figure}
\centering
\includegraphics[scale=0.5]{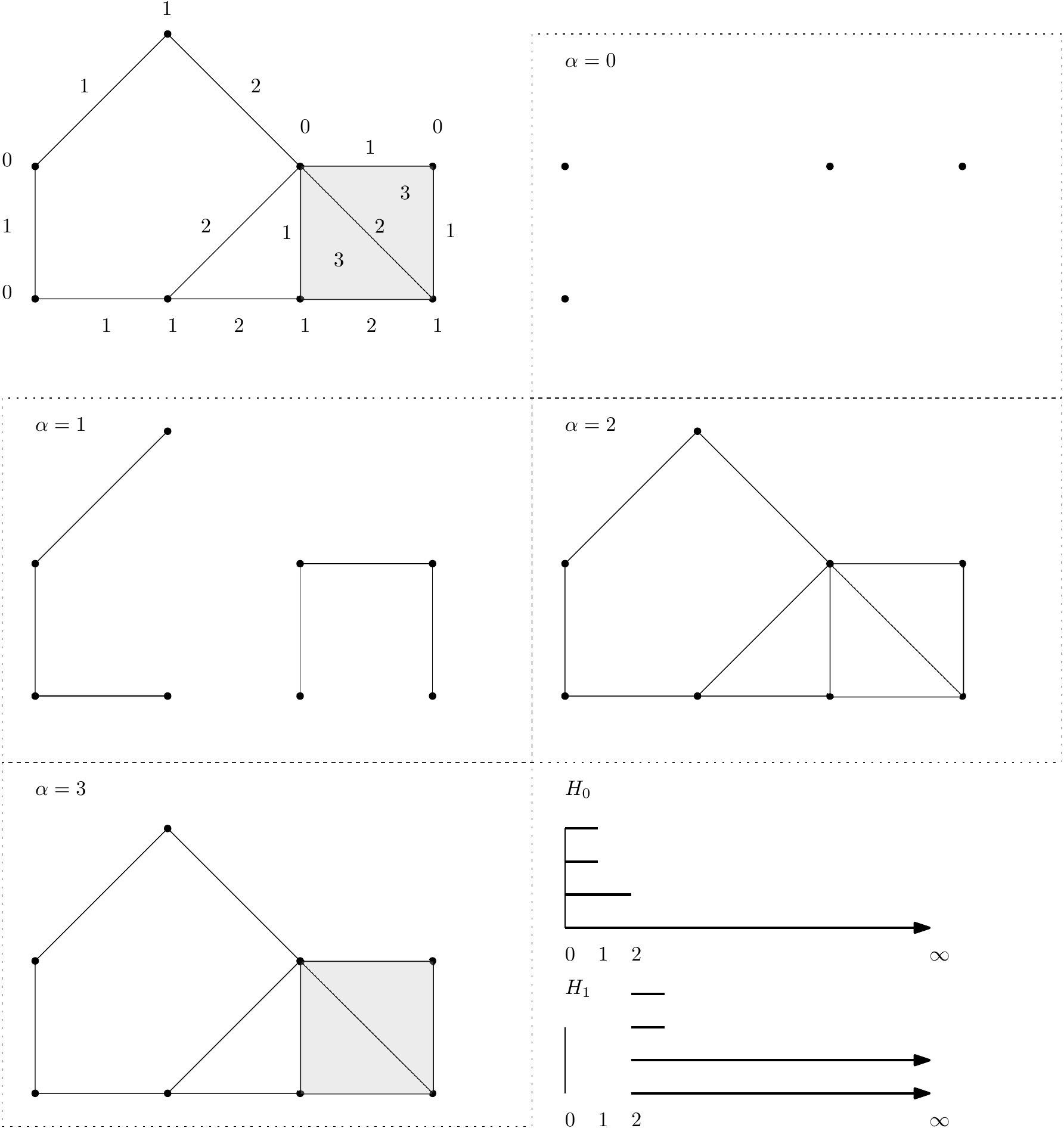}
\caption{Top-left: A simplicial complex with filtration values attached to vertices, edges, and squares. Top-right through bottom-left: sublevel-sets associated with different threshold values. Bottom-right: barcodes in dimensions zero and one. The zero-dimensional homology $H_0$ barcode contains four bars, since at $\alpha=0$ there are four connected components. Two bars die at $\alpha=1$, since at that threshold value there are only two connected components. Finally, $\alpha=2$ sees the merger of these connected component, so another bar dies at $\alpha=2$ and the last persists to infinity. In one-dimensional homology $H_1$, three bars are born at $\alpha = 2$, when three loops appear in the sublevel-set, and one of these bars dies at $\alpha=3$, when that loop is killed off by the introduction of a square.}
\label{fig:cubpers}
\end{figure}

When working with simplicial complexes, it is possible to limit the maximal dimension of simplices allowed in the construction. Given an arbitrary simplicial complex $K$, we write $K^m$ to denote the subcomplex consisting of all simplices of dimension at most $m$; this is called the \emph{$m$-skeleton of $K$}. Note that $K$ and $K^m$ have the same homology in dimensions less than $m$, but may differ in dimension $m$, and $K^m$ has no homology in dimension greater than $m$.

\subsection{Image Cubical Complexes}
\label{sec:imagecomplex}

A common alternative to simplicial complexes are \emph{cubical complexes}, i.e. shapes that are built from vertices, edges, squares, cubes, etc. Such complexes are more natural when working with data built on top of a grid, such as pixel or voxel data, see \citep{kaczynski2004computational} for details. A cubical complex can always be converted into a simplicial complex by cutting up cubes into simplicial pieces: a canonical way of doing this is the \emph{Freudenthal triangulation}. For a square, for example, this simply amounts to inserting a diagonal edge that produces two triangles. Given a function on a cubical complex, there is a canonical way to extend this function to its Freudenthal triangulation without changing its persistence; in the square example, in which the Freudenthal triangulation incorporates a diagonal edge and two triangles, we simply assign to these three simplices the filtration value assigned to the original square in the original cubical complex. Thus, from the perspective of persistence, it does not fundamentally matter if we work with cubical or simplicial complexes, although there is a difference vis-a-vis computations.\\ 

Given a $d$-dimensional grayscale image, there are two ways of turning this data into a cubical complex. One is to view the voxels as being vertices, and higher-dimensional cubes as coming from voxels adjacencies, so that pairs of adjacent pixels form an edge and squares come from four voxels in a square formation, etc. There is a canonical way of extending the function $f$ from the voxels (vertices) to the entire complex: given a cube $\tau$, define $f(\tau)$ to be $\max_{\sigma} f(\sigma)$, where the max is taken over all vertices $\sigma < \tau$. Thus, a square appears precisely once all its constituent vertices appear; this is called the \emph{lower-*} filtration.\\

Alternatively, one can also view the voxels as being $d$-dimensional cubes, and have the lower-dimensional cubes be the faces of these voxels. As before, there is a canonical way of extending the function value from the voxels (top-dimensional cubes) down to entire complex: for a cube $\sigma$, define $f(\sigma)$ to be $\min_{\tau} f(\tau)$, where the min is taken over all voxels $\tau$ that contain $\sigma$. Thus, a cube appears precisely when at least one of the voxels in which it participates does;  this is called the \emph{upper-*} filtration.\\ 

Consult figure \ref{fig:image_to_complex} for an illustration of these two images complexes. Generally, these complexes will differ, and the resulting persistence barcodes will be different. However, there exists a formula for reading the barcodes for one construction from the barcodes of the other, see \citep{bleile2021persistent}.

\begin{figure}
\centering
\includegraphics[scale=2]{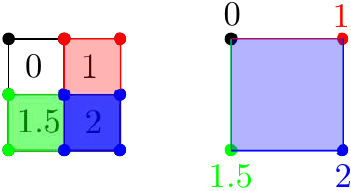}
\caption{A $2 \times 2$ image can be turned into a complex in one of two ways. Left: A complex with four top-dimensional cubes, with function values, indicated using color, extended to vertices and edges via the upper-* rule. Right: A complex with four vertices, with function values extended to the edges and interior square via the lower-* rule.}
\label{fig:image_to_complex}
\end{figure}
\subsection{Comparing Persistence Diagrams}

As multi-sets of points in the plane, persistence diagrams are not vectors. However, there exist multiple metrics for comparing persistence diagrams. The most common approach is to view persistence diagrams as discrete distributions on the plane $\mathbb{R}^2$, and use techniques from optimal transport theory, such as $p$-Wasserstein metrics $W_p$, to compare them, see \citep{oudot2017persistence} Chapter 3 and \citep{villani2021topics}. This analogy is complicated by the fact that persistence diagrams do not all have the same number of points, and that points in a persistence diagram near the diagonal line $y=x$ correspond to transient homological features, dying shortly after they are born, which ought not to play an important role in dictating similarity of diagrams. These problems are ameliorated by modifying the optimal transport protocol to allow paring points in one diagram either with points in the other diagram or with the diagonal line $y=x$, the latter incurring a cost proportional to the distance of the given point from the diagonal. This \emph{diagonal paring} allows us to define an optimal transport distance between any pair of diagrams, regardless of how many points they have. In practice, the $p$-Wasserstein metrics in use are either $p=1,p=2$, or $p=\infty$, the latter of which is called the \emph{Bottleneck distance} in persistence theory, and is written $d_B$. See Figure \ref{fig:wasserstein} for a visualization of a matching between persistence diagrams.  

\begin{figure}
\centering
\includegraphics[scale=1]{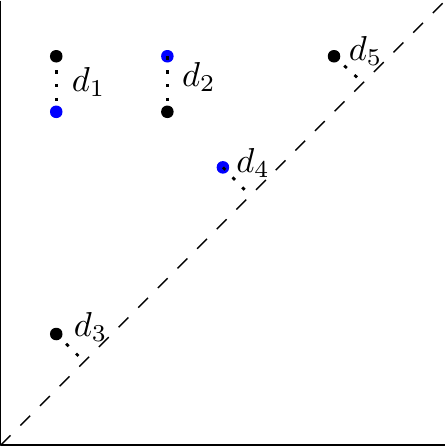}
\caption{Two persistence diagram $D_1$ and $D_2$, one in black and the other in blue. An optimal matching between these diagrams is shown, containing both diagonal and non-diagonal pairings. The resulting $p$-Wasserstein distance is then $W_{p}(D_1,D_2) = \sqrt[p]{d_1^p + d_2^p + d_3^p + d_4^p + d_5^p}$.}
\label{fig:wasserstein}
\end{figure}

\subsection{Computational Complexity}
Persistence calculations are $O(N^{\omega})$, where $N$ is the number of simplices in the filtration and $\omega$ is the matrix multiplication constant \citep{milosavljevic2011zigzag}. Though this bound is in practice quite pessimistic, it is accurate in reflecting the poor scaling of persistence calculations in the resolution of an image \citep{otter2017roadmap}.\\

The bottleneck distance between persistence diagrams can be computed exactly using the well-known Hungarian algorithm, whose complexity is $O(n^3)$ for pairs of diagrams with at most $n$ points. The number of points in the persistence diagram of filtration is always bounded by the total number $N$ of simplices in the filtration, and may be much smaller. If one is interested in \emph{approximating} the bottleneck, or more generally, $p$-Wasserstein distances between persistence diagrams, there are significantly faster methods than the Hungarian algorithm. One such method is based on \emph{entropic approximation} techniques from optimal transport \citep{lacombe2018large}, and provided that the norms of the points in the diagrams are uniformly bounded by some constant $C$, a transport plan within $\epsilon$ of the optimal matching can be produced in $O(\frac{n \log(n) C}{\epsilon^2})$ iterations of the Sinkhorn algorithm, which is itself linear in $n$.  Moreover, recent work in optimal transport theory provide an improved asymptotic convergence rates for Sinkhorn's algorithm of $\tilde{O}(n^2/\epsilon)$, where $\tilde{O}$ indicates that logarithmic factors have been hidden, see \citep{pham2020unbalanced}.


\subsection{Properties of Persistent Homology}
\label{subsec:persprop}
Persistence theory guarantees that a small modification to the filtration of a space produces only small changes in its persistence diagram. 

\begin{theorem}[\citep{cohen2007stability}\footnote{The main theorem of \citep{cohen2007stability} is much more general, applying to tame functions on triangulable spaces, conditions which are automatically satisfied in the simplicial setting.}]
\label{thm:perstab}
Let $f,g:X \to \mathbb{R}$ be two filtrations on a simplicial complex. Then the Bottleneck distance between their persistence diagrams is bounded by $\|f-g\|_{\infty}$.
\end{theorem}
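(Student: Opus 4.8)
The plan is to establish the bound by exhibiting an explicit \emph{interleaving} of the two sublevel-set filtrations and then appealing to the algebraic stability of persistence modules. Write $\delta = \|f-g\|_{\infty}$, so that $f(x) \leq g(x) + \delta$ and $g(x) \leq f(x) + \delta$ for every simplex $x \in X$. Denoting the sublevel-sets by $X_{\alpha}^{f} = \{x : f(x) \leq \alpha\}$ and $X_{\alpha}^{g} = \{x : g(x) \leq \alpha\}$, these pointwise inequalities immediately give the nested inclusions
\[
X_{\alpha}^{f} \subseteq X_{\alpha+\delta}^{g} \subseteq X_{\alpha+2\delta}^{f},
\]
together with the symmetric chain starting from $X_{\alpha}^{g}$. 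In words, shifting the $f$-filtration up by $\delta$ lands inside the $g$-filtration, and shifting again by $\delta$ returns inside the $f$-filtration, with all squares of inclusions commuting because they are induced by subcomplex containment.

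First I would apply $k$-dimensional simplicial homology to this diagram of inclusions. Functoriality turns the inclusions into linear maps between the groups $H_k(X_{\alpha}^{f})$ and $H_k(X_{\alpha}^{g})$ that commute with the internal structure maps of each filtration, so the two persistence modules are $\delta$-\emph{interleaved} in the standard sense. Since $X$ is a finite simplicial complex, every homology group is finite-dimensional and the modules are pointwise finite-dimensional, so no tameness hypotheses need to be checked. At this point the conclusion follows from the Algebraic Stability Theorem (equivalently, the easy half of the Isometry Theorem) of Chazal et al.: a $\delta$-interleaving of persistence modules forces their diagrams to be matched within bottleneck distance $\delta$. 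Taking the maximum over homological dimensions and recalling $\delta = \|f-g\|_{\infty}$ yields the stated bound.

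The entire difficulty is therefore concentrated in the stability theorem for interleaved modules, which is the genuine content; the interleaving construction itself is a formal consequence of $\|f-g\|_{\infty}$ controlling the sublevel-set inclusions. If instead I wanted an argument in the spirit of the original proof in \citep{cohen2007stability}, I would linearly interpolate $f_t = (1-t)f + tg$ and show that $t \mapsto \mathrm{Dgm}(f_t)$ is $1$-Lipschitz in $t$, the key lemma being the Box (Quadrant) Lemma that sandwiches the number of diagram points of $f_t$ inside a rectangle between the counts for nearby perturbations. The main obstacle along that route is precisely upgrading these point-counting inequalities into an actual bijective matching realizing the bottleneck distance, which requires a Hall-type argument; the interleaving formulation sidesteps this by packaging the matching inside the algebraic stability black box.
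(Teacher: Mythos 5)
Your argument is correct, but note that the paper itself does not prove this statement at all: it is imported wholesale, via citation, from Cohen-Steiner, Edelsbrunner, and Harer \citep{cohen2007stability}, whose original argument is the interpolation-plus-Box-Lemma route you sketch in your closing paragraph. Your main argument is instead the modern alternative: the pointwise bound $\delta = \|f-g\|_{\infty}$ gives the inclusions $X^{f}_{\alpha} \subseteq X^{g}_{\alpha+\delta} \subseteq X^{f}_{\alpha+2\delta}$, homology turns these into a $\delta$-interleaving, and algebraic stability (Chazal, Cohen-Steiner, Glisse, Guibas, and Oudot, later sharpened to the isometry theorem) converts the interleaving into a matching of bottleneck cost at most $\delta$. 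The argument is sound in this setting: because $f$ and $g$ are filtrations (monotone on faces), the sublevel sets really are subcomplexes; because the complex is finite, the modules are pointwise finite-dimensional and no tameness or triangulability hypotheses need checking; and every square commutes since every map is an inclusion. The comparison is worth spelling out: your route is cleaner and strictly more general (it needs no triangulable ambient space, and it quarantines all the combinatorial difficulty --- producing the actual matching --- inside the algebraic stability theorem), whereas the original proof in \citep{cohen2007stability} is self-contained at the level of diagrams but must perform the delicate Hall-type upgrade from box-counting inequalities to a bijection, which is precisely the step you identify as the obstacle. The only caveat is bibliographic rather than mathematical: algebraic stability postdates \citep{cohen2007stability}, so your proof is not a reconstruction of the cited argument but a genuinely different, and by now standard, one; if one wanted the theorem exactly as attributed, the interpolation proof is the one to reproduce.
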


\begin{corollary}
	\label{cor:infstabcor}
	Let $f,g: K^{0} \to \mathbb{R}$ be two functions on the vertex set of a simplicial complex, with lower-* extensions $\hat{f}$ and $\hat{g}$, respectively. Then $\|\hat{f}-\hat{g}\|_{\infty} \leq \|f-g\|_{\infty}$, so that the Bottleneck distance between the persistence diagrams of $\hat{f}$ and $\hat{g}$ is bounded by $\|f-g\|_{\infty}$.
\end{corollary}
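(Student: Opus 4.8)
The plan is to reduce the statement to the stability theorem (Theorem \ref{thm:perstab}) by controlling the sup-norm of the difference of the two lower-* extensions directly in terms of the sup-norm of the difference of the original vertex functions. Since Theorem \ref{thm:perstab} already bounds the Bottleneck distance between the diagrams of $\hat{f}$ and $\hat{g}$ by $\|\hat{f}-\hat{g}\|_{\infty}$, the only thing left to establish is the inequality $\|\hat{f}-\hat{g}\|_{\infty} \leq \|f-g\|_{\infty}$, after which the Bottleneck bound follows immediately by composing the two estimates.

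First I would recall that, by the definition of the lower-* filtration, each simplex $\tau$ receives the value $\hat{f}(\tau) = \max_{v} f(v)$, where the maximum ranges over the vertices $v$ of $\tau$, and similarly for $\hat{g}$. The supremum norm $\|\hat{f}-\hat{g}\|_{\infty}$ is then the maximum of $|\hat{f}(\tau)-\hat{g}(\tau)|$ over all simplices $\tau$, so it suffices to bound this quantity simplex-by-simplex and then take a maximum.

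The key step is the elementary observation that the maximum is $1$-Lipschitz with respect to the sup-norm: for any finite family of reals $\{a_v\}$ and $\{b_v\}$ one has $|\max_v a_v - \max_v b_v| \leq \max_v |a_v - b_v|$. Applying this with $a_v = f(v)$ and $b_v = g(v)$ ranging over the vertices of a fixed simplex $\tau$ gives $|\hat{f}(\tau)-\hat{g}(\tau)| \leq \max_{v \in \tau}|f(v)-g(v)| \leq \|f-g\|_{\infty}$, where the final inequality enlarges the index set to range over all vertices of $K$. Taking the maximum over all $\tau$ yields $\|\hat{f}-\hat{g}\|_{\infty} \leq \|f-g\|_{\infty}$, and combining this with Theorem \ref{thm:perstab} completes the argument.

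I do not expect any genuine obstacle here; the entire content is the Lipschitz property of the max, which I would verify in a single line by fixing an index $v_0$ achieving $\max_v a_v$ and writing $a_{v_0} \leq b_{v_0} + |a_{v_0}-b_{v_0}| \leq \max_v b_v + \max_v |a_v - b_v|$, then invoking symmetry. The one point worth flagging is purely bookkeeping: the max defining the lower-* extension is taken over the vertices of $\tau$ only, so I must enlarge it to all vertices of $K$ only at the last step, ensuring the per-simplex bound is uniform before passing to the supremum over $\tau$.
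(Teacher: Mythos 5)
Your proposal is correct and follows essentially the same route as the paper: both reduce to Theorem~\ref{thm:perstab} after proving the pointwise bound $\vert \hat{f}(\sigma)-\hat{g}(\sigma)\vert \leq \|f-g\|_{\infty}$ simplex-by-simplex, and your one-line verification of the Lipschitz property of the max (fix the maximizing vertex, bound, then use symmetry) is exactly the paper's argument with $v_f$ and $v_g$. The only difference is cosmetic: you package the key step as a general lemma about maxima, while the paper writes it inline.
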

\begin{proof}
Consider a simplex $\sigma \in K$. By definition, $\hat{f}(\sigma) = \max_{v \in \sigma^{0}}f(v)$ and $\hat{g}(\sigma) = \max_{v \in \sigma^{0}}g(v)$. Let $v_{f}$ and $v_{g}$ be the vertices of $\sigma$ realizing these maxima for $f$ and $g$, respectively. Then $f(v_{f}) \leq g(v_{f}) + \|f-g\|_{\infty} \leq g(v_{g}) + \|f-g\|_{\infty}$, so that $f(\sigma) \leq g(\sigma) + \|f-g\|_{\infty}$. A symmetric argument shows that $g(\sigma) \leq f(\sigma) + \|f-g\|_{\infty}$, so that $\vert f(\sigma) - g(\sigma)\vert \leq \|f-g\|_{\infty}$.
\end{proof}

This implies that a small error in the filter function produces only small distortion in the resulting persistence diagram. However, persistent homology is not at all stable to \emph{outliers}, i.e. a small subset of the data having large error \citep{buchet2014topological}.\\

More recent work \citep{skraba2020wasserstein} has established analogous stability results for $W_p$ metrics between persistence diagrams, \emph{although we caution readers that this paper has yet be to published}. The latter of the following two results is particularly useful for the subject matter of this paper.

\begin{definition}[\citep{skraba2020wasserstein}, Definition 4.1]
	The $L^p$ norm of a function on a simplicial complex $f:K \to \mathbb{R}$ is:
	\[\|f\|_{p}^{p} = \sum_{\sigma \in K}\vert f(\sigma)\vert^{p}.\]
\end{definition}

\begin{theorem}[\citep{skraba2020wasserstein}, Theorem 4.8]
Let $f,g : K \to \mathbb{R}$ be two monotone functions on a fixed simplicial complex. Then for all $p \geq 1$, \[W_{p}(\operatorname{Diag}(f),\operatorname{Diag}(g)) \leq \|f-g\|_{p}.\]
\end{theorem}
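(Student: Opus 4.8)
\section*{Proof proposal}

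The plan is to connect $f$ and $g$ by a straight-line homotopy and to bound the transport cost segment by segment along the way. Set $h_t = (1-t)f + tg$ for $t \in [0,1]$. The first observation is that the set of monotone (filtration-consistent) functions is convex: if $f(\sigma) \le f(\tau)$ and $g(\sigma) \le g(\tau)$ whenever $\sigma$ is a face of $\tau$, then the same inequality holds for every convex combination, so each $h_t$ is a genuine filtration, $\operatorname{Diag}(h_t)$ is well-defined, and we have the endpoints $\operatorname{Diag}(h_0) = \operatorname{Diag}(f)$ and $\operatorname{Diag}(h_1) = \operatorname{Diag}(g)$.

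Next I would invoke the vineyard structure of Cohen-Steiner, Edelsbrunner and Morozov: as $t$ varies the diagrams $\operatorname{Diag}(h_t)$ trace out continuous, piecewise-linear curves. For a path that is generic (or after a harmless perturbation dealing with simultaneous coincidences), there are only finitely many times $0 = t_0 < t_1 < \cdots < t_n = 1$ at which two simplices of equal dimension exchange their order in the filtration. On each open interval $(t_i, t_{i+1})$ the induced ordering of simplices, and hence the birth--death pairing produced by the persistence algorithm, is constant; consequently each diagram point has the form $(h_t(\sigma_b), h_t(\sigma_d))$ for a fixed birth simplex $\sigma_b$ and death simplex $\sigma_d$, and moves with constant velocity as $t$ increases. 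Crucially, every simplex serves as the birth or death simplex of at most one diagram point at any given time.

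I would then bound each segment using the identity matching supplied by the vines. Taking the $\ell^\infty$ ground metric on $\mathbb{R}^2$, the cost of transporting $(h_{t_i}(\sigma_b), h_{t_i}(\sigma_d))$ to $(h_{t_{i+1}}(\sigma_b), h_{t_{i+1}}(\sigma_d))$ is $\max(\lvert \Delta_i(\sigma_b)\rvert, \lvert \Delta_i(\sigma_d)\rvert)$, where $\Delta_i(\sigma) = (t_{i+1}-t_i)\,(g(\sigma)-f(\sigma))$. Since $\max(a,b)^p \le a^p + b^p$ for $a,b \ge 0$ and each simplex is charged to at most one pair, summing over all pairs gives
\[ W_p(\operatorname{Diag}(h_{t_i}), \operatorname{Diag}(h_{t_{i+1}}))^p \le \sum_{\sigma \in K} \lvert \Delta_i(\sigma)\rvert^p = (t_{i+1}-t_i)^p \, \|f-g\|_p^p, \]
so that $W_p(\operatorname{Diag}(h_{t_i}), \operatorname{Diag}(h_{t_{i+1}})) \le (t_{i+1}-t_i)\,\|f-g\|_p$. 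Finally, the triangle inequality for $W_p$ telescopes these estimates:
\[ W_p(\operatorname{Diag}(f), \operatorname{Diag}(g)) \le \sum_{i=0}^{n-1} W_p(\operatorname{Diag}(h_{t_i}), \operatorname{Diag}(h_{t_{i+1}})) \le \sum_{i=0}^{n-1}(t_{i+1}-t_i)\,\|f-g\|_p = \|f-g\|_p. \]

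The main obstacle is not the telescoping but the justification of the per-segment matching across the transposition times. One must verify that the diagram varies continuously (in $W_p$) as simplices swap order and as diagram points are created or destroyed at the diagonal, so that the finite sum above genuinely bounds the total cost; this is exactly the content of the vineyard continuity theorem and is where the real work lies. A secondary point requiring care is the charging argument: the inequality $\sum_{\text{pairs}} \max(\cdots)^p \le \sum_\sigma \lvert \Delta_i(\sigma)\rvert^p$ relies on the combinatorial fact that the birth and death simplices of distinct diagram points are distinct, which is what prevents double-counting and yields the clean constant $1$ in front of $\|f-g\|_p$. The choice of the $\ell^\infty$ ground metric is what makes this constant exact; a different ground metric would rescale the bound.
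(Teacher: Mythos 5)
The paper does not prove this statement at all: it is imported verbatim (as Theorem 3, with attribution) from Skraba and Turner's \emph{Wasserstein Stability of Persistence Diagrams}, so there is no in-paper argument to compare against. Your proposal is essentially the proof strategy of that cited source: linear interpolation $h_t$ through the convex set of monotone functions, constancy of the birth--death pairing between finitely many transposition times, the per-cell charging argument (each simplex is the birth or death simplex of at most one pair, so $\sum_{\text{pairs}} \max(\cdot,\cdot)^p \leq \sum_{\sigma} \lvert \Delta(\sigma) \rvert^p$), and telescoping via the triangle inequality for $W_p$. The argument is sound, and you correctly identify the two genuine pressure points: (i) continuity of the diagram across transposition times, so that the open-interval bounds extend to closed intervals and the telescoping is legitimate --- note this does not actually require a genericity perturbation, since at a tie the diagram is well-defined independently of the tie-breaking, and the one-sided limits of the piecewise-linear point tracks agree with the diagram at the tie; and (ii) the fact that the clean constant $1$ requires the $\ell^{\infty}$ ground metric on the plane, which is indeed the convention under which Skraba--Turner state this result (the paper under review leaves the ground metric implicit).
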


\begin{theorem}[\citep{skraba2020wasserstein}, Theorem 5.1]
Let $f,g:P \to \mathbb{R}$ be the grayscale functions for two images defined over the same grid of pixels $P$. Let $\hat{f}$ and $\hat{g}$ be the corresponding extensions to the $d$-dimensional cubical complex built on top of $P$, by either method described in Section \ref{sec:imagecomplex}. Then we have the stability result:
\[W_{p}(\operatorname{Diag}(\hat{f}),\operatorname{Diag}(\hat{g})) \leq \left( \sum_{i=1}^{d} 2^{d-i} {d \choose i }\right) \|f-g\|_{p} = (3^{d} - 2^{d})\|f-g\|_{p}.\]
\label{thm:STwasstab}
\end{theorem}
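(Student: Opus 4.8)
The plan is to treat $\hat f$ and $\hat g$ as two monotone filtrations on the \emph{same} underlying cubical complex $K$ (equivalently, its Freudenthal triangulation), so that the Wasserstein stability theorem for monotone functions on a fixed complex (Theorem 4.8 of \citep{skraba2020wasserstein}) applies directly and yields
\[ W_{p}(\operatorname{Diag}(\hat f),\operatorname{Diag}(\hat g)) \le \|\hat f - \hat g\|_{p}, \]
where the $L^{p}$ norm is summed over all cells of $K$. This reduces the entire statement to a purely combinatorial comparison between the cellwise discrepancy of the extensions and the pixelwise discrepancy $\|f-g\|_{p}$ of the raw images.

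For the pointwise input I would reuse the elementary observation from the proof of Corollary \ref{cor:infstabcor}: for each cell $\sigma$, the values $\hat f(\sigma)$ and $\hat g(\sigma)$ are an extremum (a $\min$ for the upper-$*$ rule, a $\max$ for the lower-$*$ rule, as in Section \ref{sec:imagecomplex}) of the pixel values over the voxels incident to $\sigma$, and the difference of two such extrema is bounded by the largest pixel discrepancy among those incident voxels. Summing these bounds over all cells and then regrouping by pixel charges each pixel once for every incident cell at which it realizes the extreme value. The number of faces of a fixed $d$-dimensional voxel is $\binom{d}{i}2^{d-i}$ in dimension $i$, and the positive-dimensional faces total
\[ \sum_{i=1}^{d}\binom{d}{i}2^{d-i}=3^{d}-2^{d}, \]
which is exactly the constant appearing in the statement; Minkowski's inequality across these face-classes is what would convert a per-class bound of $\|f-g\|_{p}$ into the global factor $3^{d}-2^{d}$.

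The hard part, and the reason the naive execution of the previous paragraph does not close, is that the cellwise bound genuinely \emph{over-counts}: a single pixel value is copied onto all of its incident cells, so $\|\hat f - \hat g\|_{p}$ measured over $K$ can exceed \emph{any} fixed multiple of $\|f-g\|_{p}$ (already a one-pixel change propagates to $3^{d}$ cells), and the inequality $\|\hat f - \hat g\|_{p} \le (3^{d}-2^{d})\|f-g\|_{p}$ is simply false in general. What rescues the bound is that these replicated changes are \emph{coherent} — they translate a single persistence point rather than creating a separate discrepancy per incident face — so the backbone must be upgraded from the crude cellwise estimate to a matching-level argument. Concretely, I would build the extension through a sequence of elementary filtrations indexed by the $3^{d}-2^{d}$ positive-dimensional face-types of a voxel, each obtained from the previous by installing one layer of faces, and control the total by the $W_{p}$-triangle inequality so that each step contributes at most $\|f-g\|_{p}$ via the monotone stability theorem. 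The main obstacle is arranging these steps so that each is a faithful copy of the pixel discrepancy while respecting the coherence of the $\min$/$\max$ extension; the delicate case is a shared face, whose value is an extremum over several voxels, where one must avoid paying separately for each. This bookkeeping, invisible to the crude cellwise bound, is where the real content of the theorem lies.
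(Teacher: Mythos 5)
First, a point of order: the paper does not prove this statement at all --- it is imported verbatim, with attribution, as Theorem 5.1 of \citep{skraba2020wasserstein} (and the paper explicitly cautions that this source is unpublished). So there is no paper-internal proof to compare against, and your attempt must be judged on its own terms. On those terms, your diagnosis of the naive route is correct and is genuinely worth having: applying Theorem 4.8 of \citep{skraba2020wasserstein} to $\hat f,\hat g$ reduces everything to bounding $\|\hat f-\hat g\|_{p}$ over the cells of $K$, and this cannot yield the stated constant. A single pixel perturbed by $\delta$ propagates coherently to as many as $3^{d}$ incident cells, so $\|\hat f-\hat g\|_{p}$ can equal $3^{d/p}\delta$ while $(3^{d}-2^{d})\|f-g\|_{p}=(3^{d}-2^{d})\delta$; already for $d=1$ (any $p$) or $p=1$ (any $d$) the intermediate inequality fails. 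The naive route does give a valid bound with constant $3^{d/p}$ (for $p=\infty$ it is exactly Corollary \ref{cor:infstabcor}), but that is a different theorem.

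The gap is that everything after this diagnosis is a plan, not an argument, and you say so yourself (``this bookkeeping \ldots is where the real content of the theorem lies''). The proposed repair --- a chain of interpolating filtrations indexed by the $3^{d}-2^{d}$ positive-dimensional face-types of a voxel, with the $W_{p}$ triangle inequality and Theorem 4.8 applied once per step --- is the right shape, but the two properties it needs are precisely the ones you leave unresolved. (i) Each intermediate function must itself be monotone for Theorem 4.8 to apply, and replacing $\hat f$-values by $\hat g$-values on one layer of faces generically destroys monotonicity against the layers not yet replaced: in the upper-$*$ construction with two adjacent voxels, replacing the voxel values $5,5$ by $0,0$ while the shared vertex still carries $\min=5$ already violates it, and no ordering of the face-classes that survives this for arbitrary $f,g$ is exhibited. (ii) Within each step, the cell-to-pixel charging must be injective with the per-cell difference bounded by the charged pixel's difference, and this fails naively exactly at shared faces, where the extremum may be realized by different voxels for $\hat f$ and for $\hat g$. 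There is also a third, purely combinatorial loose end: your chain is indexed by positive-dimensional face-types, yet in the upper-$*$ construction the dimension-zero cells also change value and are not covered by the $3^{d}-2^{d}$ count, so it is not even clear that the proposed steps exhaust the cells on which $\hat f$ and $\hat g$ differ. As it stands, your argument establishes only the weaker $3^{d/p}$ bound; the stated inequality remains unproved.
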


Another important feature of persistent homology is that the pipeline mapping a function $f$ on a simplicial complex $K$, thought of as a vector in $\mathbb{R}^{K}$, to its associated persistence diagram is  \emph{almost everywhere differentiable}  \citep{gameiro2016continuation,poulenard2018topological}. This is because the coordinates of the points in the diagram correspond to the values of $f$ on critical simplices, and this correspondence is generically locally stable. However, large perturbations of $f$ will change the location of critical simplices, as well as their pairings \citep{bendich2020stabilizing}, and non-differentiable nonlinearities can be found in those degenerate cases when the critical simplex data is not unique and locally unstable. This almost everywhere differentiability of persistent homology allows us to use gradient descent methods for topological optimization \citep{carriere2021optimizing}.  Applications of topological optimization to machine learning models, and especially to deep learning, were considered in \citep{gabrielsson2020topology}.  \\ 

It is also important to note that persistent homology is not an injective invariant, meaning that different functions on the same space, or on different spaces, can have the same persistence diagrams \citep{curry2018fiber,leygonie2022fiber,leygonie2021algorithmic}, see Figure \ref{fig:samepers}. Thus, some information is lost in the process of converting a pair $(X,f)$  to a persistence diagram.\\

\begin{figure}
\centering
\includegraphics{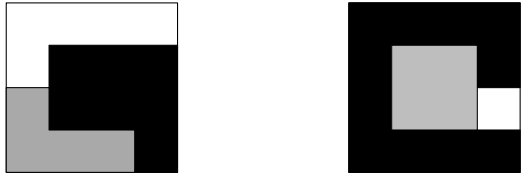}
\caption{Two functions on the same grid with the same persistence, using either sublevel- or superlevel-set filtrations. Color scheme: \{black: 0, grey: 0.5, white: 1\}. All persistence diagrams consist of the single point $(0,1)$ in dimension $0$.}
\label{fig:samepers}
\end{figure}

We conclude the background section with an important lemma about the persistence of simplicial complexes embedded in Euclidean spaces. This lemma equates the persistence of two filtrations on such complexes, both extended from a function on the vertex set, one discretely (via the lower-* filtration) and the other continuously (via linear interpolation).

\begin{definition}
	 Let $K$ be a simplicial complex, and $f: K^{0} \to \mathbb{R}$ a function on the vertices of $K$. We write $\hat{f}$ to denote the lower-* extension of $f$ to all of $K$, defined by $\hat{f}(\sigma) = \max_{v \in \sigma^{0}} f(v)$ for $\sigma$ a simplex with vertex set $\sigma^{0}$. 
\end{definition}

\begin{proposition}
Let $K$ be an embedded simplicial complex in $\mathbb{R}^d$ with vertex set $K^{0}$. For a function $f:K^{0} \to \mathbb{R}$ with lower-* extension $\hat{f}$, we can also extend $f$ to all of $K$ via linear interpolation, producing a function $f_{*}:K \to \mathbb{R}$. Then these two extensions produce identical persistence diagrams, i.e. $\operatorname{Diag}(K,f_{*}) = \operatorname{Diag}(K,\hat{f})$. 
\label{prop:twofilt}
\end{proposition}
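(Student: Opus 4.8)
The plan is to compare the two filtrations directly on the underlying space $|K|$ and exhibit a filtered homotopy equivalence between them; since isomorphic persistence modules yield identical diagrams, this suffices. First I would identify the sublevel sets of each filtration. For the lower-* extension, the sublevel set at parameter $\alpha$ is the subcomplex $K_\alpha := \{\sigma \in K : \hat f(\sigma) \le \alpha\}$, and because $\hat f(\sigma) = \max_{v \in \sigma^0} f(v)$, this is exactly the full subcomplex of $K$ induced by the ``low'' vertex set $V_\alpha := \{v \in K^0 : f(v) \le \alpha\}$. For the linear interpolation, the sublevel set is the (generally non-simplicial) region $|K|_\alpha := \{x \in |K| : f_*(x) \le \alpha\}$. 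Since $f_*$ is affine on each simplex and therefore attains its maximum at a vertex, every point of $|K_\alpha|$ satisfies $f_* \le \alpha$, so there is a natural inclusion $\iota_\alpha : |K_\alpha| \hookrightarrow |K|_\alpha$, and for $\alpha \le \beta$ these inclusions commute with the filtration inclusions $|K_\alpha| \hookrightarrow |K_\beta|$ and $|K|_\alpha \hookrightarrow |K|_\beta$.

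The heart of the argument is to show that each $\iota_\alpha$ is a homotopy equivalence by constructing a deformation retraction of $|K|_\alpha$ onto $|K_\alpha|$. Working one simplex at a time, I would write a point of $\sigma$ in barycentric coordinates $x = \sum_i \lambda_i v_i$ and split the vertices of $\sigma$ into low vertices $L$ (those in $V_\alpha$) and high vertices $H$. If $f_*(x) \le \alpha$, then at least one low vertex carries positive weight, for otherwise $f_*(x)$ would be a convex combination of values all exceeding $\alpha$; hence the projection $p(x) := (\sum_{i \in L} \lambda_i v_i)/(\sum_{i \in L}\lambda_i)$ onto the face spanned by $L$ is well defined, lands in $|K_\alpha|$, and satisfies $f_*(p(x)) \le \alpha$ by the same inequality. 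I would then use the straight-line homotopy $r_t(x) = (1-t)x + t\,p(x)$; because $f_*$ is affine on $\sigma$ and both endpoints lie in the sublevel set, the entire path does too.

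The step I expect to require the most care is verifying that these per-simplex retractions are continuous and agree on shared faces, so that they glue to a single global deformation retraction. Continuity within a simplex holds because $\sum_{i \in L}\lambda_i$ is continuous and strictly positive on the compact set $\{x \in \sigma : f_*(x) \le \alpha\}$, hence bounded below there, making $p$ Lipschitz. Consistency across faces holds because the map only alters the weights on vertices in the support of $x$—zeroing out high vertices outside the support has no effect—and the low/high labeling of the support vertices is determined by $f$ alone, independent of the ambient simplex; thus the formula computed in any coface restricts to the formula computed on the shared face. The closed-cover gluing lemma then yields a continuous $r : |K|_\alpha \times [0,1] \to |K|_\alpha$ that fixes $|K_\alpha|$ pointwise (there $H$ is empty, so $r_t = \mathrm{id}$) and satisfies $r_1(|K|_\alpha) \subseteq |K_\alpha|$.

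Finally, applying homology to the commuting squares of inclusions produces a ladder in which every horizontal map $H_*(\iota_\alpha)$ is an isomorphism and every square commutes, which is precisely an isomorphism of persistence modules between $\alpha \mapsto H_*(|K_\alpha|)$ and $\alpha \mapsto H_*(|K|_\alpha)$. Since the first is by definition the persistence module of the lower-* filtration $\hat f$ and the second that of the interpolation $f_*$, the two modules have the same interval decomposition, and therefore $\operatorname{Diag}(K,\hat f) = \operatorname{Diag}(K,f_*)$ in every dimension.
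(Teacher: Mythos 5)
Your proof is correct. The paper itself gives no argument for this proposition: it simply cites Lemma 2.3 of Bestvina--Brady and Section VI.3 of Edelsbrunner--Harer, which together contain exactly the statement needed. What you have done is reconstruct the content of those citations from scratch: your projection $p(x) = \bigl(\sum_{i\in L}\lambda_i v_i\bigr)/\sum_{i\in L}\lambda_i$, which pushes barycentric weight off the high vertices, is precisely the descending retraction underlying the Morse-theoretic lemma of Bestvina--Brady (and the PL versus lower-star comparison in Edelsbrunner--Harer), and your verifications --- well-definedness because some low vertex carries positive weight, invariance of the sublevel set under the straight-line homotopy by affineness of $f_*$ on each simplex, compatibility across shared faces because the low/high labeling depends only on $f$, and the fact that $|K_\alpha|$ is fixed pointwise --- constitute the standard proof that $\{x : f_*(x) \le \alpha\}$ deformation retracts onto the full subcomplex spanned by the low vertices. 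The concluding step, that a level-wise isomorphism commuting with the structure maps is an isomorphism of persistence modules and hence gives equal diagrams, is also standard. So the two routes differ only in packaging: the paper's citation buys brevity and inherits the greater generality of the cited results (affine cell complexes, general Morse functions), while your argument buys self-containedness and makes visible exactly which features of the lower-* rule (max over vertices) and of linear interpolation (affineness on simplices) drive the equality. Two minor points you should make explicit: $K$ must be finite (so each $\sigma \cap \{f_* \le \alpha\}$ is compact, the closed-cover gluing is over finitely many sets, and both modules are tame with well-defined diagrams), and the persistence module of $\hat f$ is a priori built from simplicial homology of the subcomplexes $K_\alpha$, so one should invoke the natural isomorphism between simplicial homology and singular homology of $|K_\alpha|$, which commutes with inclusions, before running your ladder argument.
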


\begin{proof}
This follows from Lemma 2.3 of \citep{bestvina1997morse} and Section VI.3 of \citep{edelsbrunner2010computational}.
\end{proof}

\section{Convolutional Persistence}
\label{sec:convpers}

 We now introduce two modifications to functional persistence of complexes, one based on techniques in \emph{image processing} and applying to cubical complexes, and another based on \emph{graph convolutions} and applying to either cubical or simplicial complexes (although, without loss of generality, we focus on the latter -- see Section \ref{sec:imagecomplex} for how a cubical complex can be converted into a simplicial complex). We call the former \emph{Image Convolutional Persistence} and the latter \emph{Simplicial Convolutional Persistence}.

\subsection{Image Convolutional Persistence}

Let $P \subset \mathbb{Z}^d$ be a rectangle inside of the integer lattice, and let $f: P \to \mathbb{R}$ be a function defined on $P$. $P$ can be viewed as the vertex set of an $m$-dimensional cubical complex $K^{m}_{P}$, and $f$ can be extended to this complex via the lower-* rule, in which $f(\sigma) = \max_{p \in \sigma}f(p)$, as in Section \ref{sec:imagecomplex}.\\

\begin{definition}
Let $B \subset \mathbb{Z}^d$ be another rectangle, and let $g: B \to \mathbb{R}$ be a function on this rectangle; the pair $(B,g)$ acts as a \emph{convolutional filter}. Fix a vector $k = (k_1, \cdots, k_d)$ with $k_i \in \mathbb{N}_{>0}$, corresponding to the \emph{stride} of the convolution. For $v \in \mathbb{Z}^d$ with $B + (v \odot k) \subseteq P$,\footnote{The symbol $\odot$ refers to the Hadamard product, which performs componentwise multiplication between vectors.} define:
\[(f \ast g)(v) = \sum_{p \in B} g(p)f(p+v \odot k).\]
Let $R \subset \mathbb{Z}^d$ be the collection of values $r$ such that $B + (r \odot k) \subseteq P$, which is necessarily also a rectangle. The pair $(R, f \ast g)$ is the output of our convolution. See Figure \ref{fig:convolution}.  We will generally assume that $R$ is not empty, meaning that \emph{some} translate of $B$ is a subset of $P$.
\end{definition}

\begin{remark}
	 If $B$ consists of a single element $b$, and $g(b) = 1$, then, up to translation, $P=R$ and $f = f \ast g$.
\end{remark}

\begin{remark}
	 Translating $P$ or $B$ has no substantial impact on the above construction, resulting only in a translation of the corresponding region $R$. Since all of the invariants we study here are invariant to translation of the domain, it would be possible to work entirely in the quotient space of grid functions $f:P \to \mathbb{R}$ \emph{defined up to translation}. Equivalently, we might work with rectangles $P$ whose lattice-minimal element (i.e. bottom-left corner) is the zero vector. Ultimately, we have chosen to eschew these additional formalisms in the interest of keeping our definitions as concrete as possible, but encourage the reader to adopt whichever perspective they find most insightful.  
\end{remark}

\begin{definition}
As a generalization of the above definition, we can allow $n$-channel images $f:P \to \mathbb{R}^n$ and filters $g:B \to \mathbb{R}^n$, and replace the product in the formula above with a dot product:
\[(f \ast g)(v) = \sum_{p \in B} g(p)\cdot f(p+v \odot k).\]
Thus, the result of the convolution is real-valued (not vector valued), and ordinary persistent homology can be computed. In principle, it is possible to have the image and filter dimensions differ, and to compute the persistent of multi-channel images via \emph{multiparameter persistence}, see \citep{botnan2022introduction}, but this is beyond the scope of this work and presents its own set of theoretical and computational challenges.
\end{definition}

\begin{figure}
\centering
\includegraphics[scale=0.7]{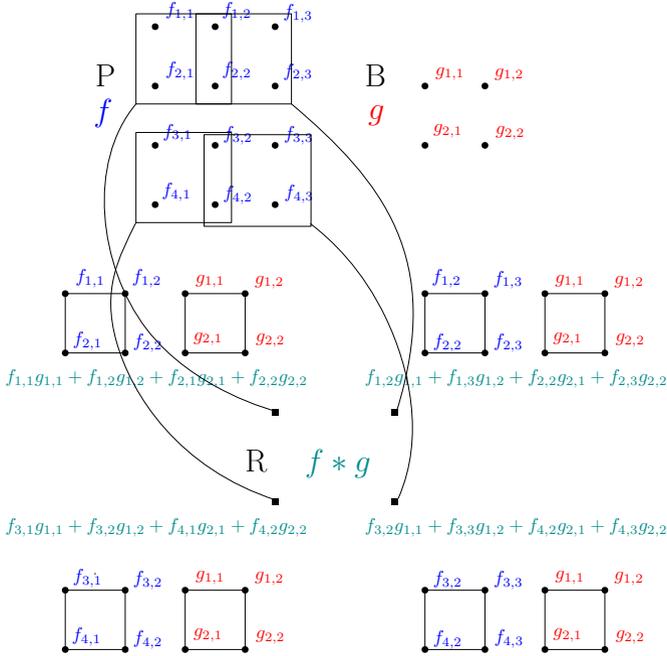}
\caption{$f$ is a function on a $4 \times 3$ grid $P$, with function values indicated by subscripts. $g$ is a filter function on a $2 \times 2$ grid $B$. Using a stride vector $k = (1,2)$ means that the rectangle $B$ is translated horizontally one unit but vertically two units; there are four possible translates of $B$ that sit inside $P$, corresponding to the $2 \times 2$ grid $R$. Each vertex of $R$ corresponds to a translate of $B$, and the value of $f \ast g$ at that vertex is obtained by lining up the values of $g$ and $f$ on that translate, multiplying them element-wise, and then summing them.}
\label{fig:convolution}
\end{figure}

$R$ may be much smaller than the support of $f$, since the various translates of $B$ covering this support are not required to overlap much, if at all. In the degenerate setting where $P$ and $B$ are the same shape, $R$ will consist of a single vertex. For the purposes of computing persistence, we will think of $R$, like $P$, as being the vertex set of a cubical complex $K_{R}^{m}$, and we extend functions on $R$ to the entirety of the cubical complex using the lower-* rule.

\begin{notation}
For a function $f$ defined on the vertex set of a complex $K$, we write $\hat{f}$ to denote its lower-* extension to all of $K$.
\end{notation}

We propose that the collection of persistence diagrams of the form $PH(K_{R}^{m},\widehat{f \ast g_{i}})$, for some set of filter functions $\mathcal{G} = \{g_i\}$, is of greater general utility than $PH(K_{P}^{m},\hat{f})$. The computational advantages are immediate: when the stride $k$ is large, so that $R$ is much smaller than $P$, we have replaced a single, very expensive calculation with multiple, significantly faster calculations that can be performed in parallel. Moreover, we claim that this approach, which we deem \emph{convolutional persistence} has superior inverse properties. Proofs of stability and injectivity can be found in the following Section \ref{sec:theoreticalresults}.\\

The flexibility of convolutional persistence comes from allowing the collection of filters $\mathcal{G}$ to be curated for the task at hand. Indeed, there are many ways for this choice to be made:
\begin{enumerate}
\item Take $\mathcal{G}$ to consist of a collection of popular filters in image processing, like blurring, sharpening, and boundary detection.
\item Take $\mathcal{G}$ to consist of eigenfilters identified via PCA on the set of patches of images in the training set.
\item Picking filters \emph{at random}.
\item Incorporate convolutional persistence in a deep learning pipeline and learn $\mathcal{G}$ to minimize a chosen loss function.
\end{enumerate}

In Section \ref{sec:experiments}, we compare some of these different choices on a range of data sets and learning tasks. We also demonstrate the stability of computational persistence experimentally.\\

We conclude this section with a simple complexity analysis. Given a $d$-dimensional grid $P$ with $p$ pixels of resolution in each direction, the total number of voxels $\vert P\vert$ is equal to $p^d$. Each voxel participates in a uniformly bounded number of higher-dimensional simplices, so the total number of simplices in the resultant complex is $O(p^d)$. Note that this is different from simplicial complexes built on top of point clouds, in which each point is allowed to create simplices with every other point; these non-local connections cause a further combinatorial explosion in the number of simplices. Thus, for functional persistence on grids, the total complexity of computing the persistence of a function on such a complex is $O(p^{d\omega})$, where $\omega$ is the matrix multiplication constant. When moving to convolutional persistence, the resolution shrinks when the stride is bigger than one. For a stride vector $k=(2,\cdots, 2)$, the total number of voxels in the resulting grid $R$ is $O(\left(\frac{p}{2}\right)^d)$, and hence the persistence calculations are $O(\left(\frac{p}{2}\right)^{d\omega})$, so that the complexity bound has decreased by a factor of $2^{d\omega}$. More generally, setting $\kappa = \Pi_{i} k_{i}$, computing convolutional persistence for $M$ filters has complexity $O(M\left(\frac{\vert P\vert}{
\kappa}\right)^{\omega})$.  Additionally, increasing the size of $B$ decreases the size of $R$, as fewer translates of $B$ sit inside $P$; to be precise, increasing the length of $B$ in a given coordinate shrinks the resulting region $R$ by an identical amount in the same coordinate. Altogether, this means that, even computing convolutional persistence for a large class of filters, the speed-ups afforded by downsampling are significant, especially when using large stride vectors. This was verified experimentally in \citep{solomon2021fast}, in the context of optimization of topological functionals.

\subsection{Simplicial Convolutional Persistence}

 Let $K$ be a connected simplicial complex, and $f:K^{0}\to \mathbb{R}^{d}$ a function on the vertices of $K$, which can be extended via the lower-* filtration to a function $\hat{f}$ on all of $K$. If $\vert K^{0}\vert = n$, we can encode the function $f$ by a $n \times d$ matrix $X$ by imposing an order on the vertices of $K$ and setting the $i$th row of $X$ to be the value of $f$ on the $i$th vertex. We can also encode the $1$-skeleton $K^{1}$ of the complex as a graph with adjacency matrix $A$. Following \citep{kipf2016semi}, for a weight matrix $W$ of shape $d \times k$, we can transform the data $X$ via the map $X \to AXW$, which amounts to transforming the data $X$ by first multiplying by $W$, and then defining the value at a vertex to be the average of the values at its neighbors (adding a self-loop lets us include the vertex itself in this set). When $W = w$ has shape $d \times 1$, $AXW$ encodes a map from $K^{0}$ to $\mathbb{R}$. We now define simplicial convolutional persistence by generalizing this construction and incorporating persistent homology.

\begin{definition}
	Let $K$ be a simplicial complex and $f: K^{0} \to \mathbb{R}^d$ a function on the vertices of $K$, encoded by an $n \times d$ matrix $X$. Let $A$ be an arbitrary, but fixed, $n \times n$ matrix, not necessarily the adjacency matrix of $K^{1}$, and $W = w$ a $d \times 1$ weight matrix. We define the \emph{simplicial convolutional persistence} of $(K,A,f)$ on $w$ to be $\operatorname{PH}(K,\widehat{f \ast w})$, where $f \ast w$ is the function on $K^{0}$ associated to $AXw$, and $\widehat{f \ast w}$ is its lower-* extension to all of $K$.
\end{definition}

We take $A$ to be an arbitrary matrix to allow for graph structures with self-loops, weights, and asymmetric distances. It might seem from the definition above that there is no need to separate the data matrix $X$ from the generalized adjacency matrix $A$, and so we might consider only the product $AX$, as this is what appears in all calculations. However, distinguishing $X$ from $A$ has conceptual value, as $X$ encodes the data defined on the simplicial complex, and $A$ defines the complex itself. Thus, for example, noise in our measurements ends up in matrix $X$ rather than the matrix $A$, and this is important for understanding the stability of convolutional persistence. 

\begin{remark}
	As simplicial convolutional persistence does not shrink the domain of the function, it does not exhibit the computational speedups of image convolutional persistence.
\end{remark}

\subsection{Prior Work}
\label{sec:priorwork}
Convolutional persistence is a type of \emph{topological transform}, in which a parametrized family of topological invariants is associated to a fixed input. The first topological invariants studied were the \emph{persistent homology transform} (PHT) and \emph{Euler characteristic transform} (ECT) \citep{turner2014persistent}. In the same paper in which the PHT and ECT were defined, inverse theorems were proven; these theorems were generalized and extended by later work \citep{curry2018many,ghrist2018persistent}. We will later show that convolutional persistence is, generically, a special case of the PHT, allowing us to take advantage of the inverse theory developed for that invariant. Invertible topological transforms have also been developed for weighted simplicial complexes \citep{jiang2020weighted}, metric graphs \citep{oudot2017barcode}, metric spaces \citep{solomon_et_al:LIPIcs.SoCG.2022.61}, and metric measure spaces \citep{maria2019intrinsic}. Consult \citep{oudot2020inverse} for a more thorough survey of inverse problem in applied topology.\\

Prior research has also studied the interaction of persistent homology and image convolutions. In \citep{solomon2021fast}, the authors use convolutions to stabilize and speed up topological optimization; the approach taken there can be viewed as a special case of convolutional persistence, in which the filter set $\mathcal{G}$ consists of many random approximations of the box-smoothing filter. Unlike in this work, the goal of \citep{solomon2021fast} is to produce many downsampled images with persistence similar to the original, permitting the computation of robust topological gradients. Another paper that considers both persistence and convolutions is \citep{kim2020pllay}, where the authors develop a topological layer for deep learning models; one such model they consider is a convolutional neural network, and the authors design the network to compute topological features both before and after computing convolutions. However, \citep{kim2020pllay} do not study the interaction of convolutions and persistence in any generality. \\

\section{Theoretical Results}
\label{sec:theoreticalresults}
In what follows, we prove two versions of each result, one for each flavor of convolutional persistence.

\subsection{Stability} 

\begin{remark}
	We remind readers that Theorem \ref{thm:STwasstab}  \citep{skraba2020wasserstein} is an unpublished result, and that this Theorem is needed to establish the $p \neq \infty$ cases of the following two propositions.
\end{remark}

We begin with image convolutional persistence. Fix an input domain $P$, filter region $B$ and stride vector $k$, and a skeleton dimension $m$. Focusing on single-channel images, this determines the domain $R$ for any convolutions of $f: P \to \mathbb{R}$ and $g: B \to \mathbb{R}$.

\begin{proposition}
\label{prop:stability}
Let $f_1,f_2: P \to \mathbb{R}$ be two functions on $P$, and $g:B \to \mathbb{R}$ a filter function. Then, computing persistence over the complex $K_{R}^m$, for any $p\geq 1$ we have:
\begin{align*}
	W_{p}(\operatorname{Diag}(\widehat{f_1 \ast g}),\operatorname{Diag}(\widehat{f_2 \ast g}))  & \leq\\ (3^{m}-2^{m})\|g\|_{1} \|f_1 - f_2\|_{p} & \leq\\ \vert B \vert (3^{m}-2^{m})\|g\|_{\infty} \|f_1 - f_2\|_{p}.
\end{align*}

When $p = \infty$, the bound can be made tighter:
\begin{align*}
	W_{\infty}(\operatorname{Diag}(\widehat{f_1 \ast g}),\operatorname{Diag}(\widehat{f_2 \ast g}))  & \leq\\ \|g\|_{1} \|f_1 - f_2\|_{\infty} & \leq\\ \vert B \vert \|g\|_{\infty} \|f_1 - f_2\|_{\infty}.
\end{align*}

\end{proposition}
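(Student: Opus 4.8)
The plan is to split the argument into two essentially independent pieces: a purely combinatorial estimate controlling how strided convolution interacts with the $L^p$ norm, followed by an appeal to the stability theorems already cited in the excerpt. Writing $h = f_1 - f_2$, the first observation is that convolution is linear in its image argument, so $f_1 \ast g - f_2 \ast g = h \ast g$, and it therefore suffices to bound $\|h \ast g\|_p$ on $R$ in terms of $\|h\|_p$ on $P$.

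For the convolution estimate I would write $h \ast g = \sum_{p \in B} g(p)\, h_p$, where $h_p(v) = h(p + v \odot k)$ is the function on $R$ obtained by sampling $h$ along the translate indexed by $p$. Minkowski's inequality in $\ell^p(R)$ gives $\|h \ast g\|_p \leq \sum_{p \in B}|g(p)|\,\|h_p\|_{\ell^p(R)}$. The key step, and the one place where the geometry of the stride enters, is to note that for fixed $p$ the map $v \mapsto p + v \odot k$ is injective from $R$ into $P$ (because each $k_i > 0$), so the points $p + v \odot k$ for $v \in R$ are distinct elements of $P$. Hence $\|h_p\|_{\ell^p(R)}^p = \sum_{v \in R}|h(p + v \odot k)|^p \leq \sum_{q \in P}|h(q)|^p = \|h\|_p^p$, and therefore $\|h \ast g\|_p \leq \|g\|_1\,\|h\|_p$. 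The outer inequality $\|g\|_1 \leq |B|\,\|g\|_\infty$ is immediate, and this same chain holds verbatim for $p = \infty$, where $\|h_p\|_\infty \leq \|h\|_\infty$ trivially.

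With this in hand the finite-$p$ bounds follow by applying Theorem \ref{thm:STwasstab} to the grid functions $f_1 \ast g$ and $f_2 \ast g$ on $R$, extended by the lower-* rule to the $m$-dimensional complex $K_R^m$: this yields $W_p(\operatorname{Diag}(\widehat{f_1 \ast g}),\operatorname{Diag}(\widehat{f_2 \ast g})) \leq (3^m - 2^m)\|h \ast g\|_p$, and substituting the convolution estimate closes the chain. For $p = \infty$ I would instead route through Corollary \ref{cor:infstabcor} and Theorem \ref{thm:perstab}: since the lower-* extension is non-expansive in the sup norm, $W_\infty = d_B \leq \|\widehat{f_1 \ast g} - \widehat{f_2 \ast g}\|_\infty \leq \|h \ast g\|_\infty \leq \|g\|_1\|h\|_\infty$. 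This also explains structurally why the $(3^m - 2^m)$ factor disappears in the $p = \infty$ case: the extension costs nothing in sup norm, whereas for finite $p$ the accumulation of the bound over all simplices of $K_R^m$ is precisely what generates that constant.

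I expect the main obstacle to be the convolution estimate, and more precisely the bookkeeping that a strided sampling of $h$ has $\ell^p$ norm bounded by the full norm of $h$; the injectivity of $v \mapsto p + v \odot k$ is what prevents any point of $P$ from being overcounted, and without the positivity of the stride entries this step would fail. Everything downstream is a mechanical substitution into the two cited stability theorems, and the only remaining subtlety is to confirm that the dimension entering Theorem \ref{thm:STwasstab} is the skeleton dimension $m$ of $K_R^m$ rather than the ambient lattice dimension $d$.
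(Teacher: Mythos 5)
Your proof is correct, and it follows the same overall skeleton as the paper's: bound $\|f_1 \ast g - f_2 \ast g\|_p$ by $\|g\|_{1}\|f_1 - f_2\|_{p}$, note $\|g\|_{1} \leq \vert B \vert \|g\|_{\infty}$, and then feed this into Theorem \ref{thm:STwasstab} for finite $p$ and into Corollary \ref{cor:infstabcor} (with bottleneck stability, Theorem \ref{thm:perstab}) for $p = \infty$. Where you genuinely diverge is in the convolution estimate itself: the paper obtains $\|f_1 \ast g - f_2 \ast g\|_{p} \leq \|g\|_{1}\|f_1 - f_2\|_{p}$ by citing Young's convolutional inequality for locally compact groups (taking $r=p$, which forces $q=1$), whereas you prove exactly the needed case from scratch, decomposing $h \ast g = \sum_{b \in B} g(b)\, h_b$, applying Minkowski's inequality, and observing that for each fixed $b \in B$ the strided sampling map $v \mapsto b + v \odot k$ is injective from $R$ into $P$ (this is where positivity of the stride entries enters), so that $\|h_b\|_{\ell^p(R)} \leq \|h\|_{\ell^p(P)}$. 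Your route is longer but arguably more faithful to the setting: the paper's operation is a strided, finite-domain correlation rather than the group convolution on $\mathbb{Z}^d$, so invoking Young's inequality verbatim quietly requires precisely the subsampling-is-injective remark that you make explicit. Your two closing observations — that the dimension entering Theorem \ref{thm:STwasstab} must be read as the complex dimension $m$ of $K_R^m$ rather than the ambient lattice dimension, and that the $(3^m - 2^m)$ factor vanishes at $p=\infty$ because the lower-* extension is non-expansive in the sup norm (Corollary \ref{cor:infstabcor}) while for finite $p$ the extension inflates the norm across all cells — are both consistent with how the paper uses these results, and the second is a structural explanation the paper does not spell out.
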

\begin{proof}Young's convolutional inequality\footnote{See \citep{hewitt2012abstract}, Theorem 20.18 on page 296, for a proof of Young's inequality for locally compact groups, which in particular includes $\mathbb{Z}^d$.} states that for $ 1 \leq p,q,r \leq \infty$ with $\frac{1}{p} + \frac{1}{q} = \frac{1}{r} + 1$,
\[\|f \ast g\|_{r} \leq  \|g\|_{q} \|f\|_{p}.\]
Setting $r=p$ forces $q=1$, and setting $f = f_1 - f_2$, we obtain the bound $\|f_1 \ast g - f_2 \ast g\|_{p} \leq \|g\|_{1}\|f_1 - f_2\|_{p}$. Since $g$ is supported on a set of $\vert B \vert$ elements, we have $\|g\|_{1} \leq \vert B\vert\|g\|_{\infty}$, so that $\|f_1 \ast g - f_2 \ast g\|_{p} \leq \vert B \vert \|g\|_{\infty}\|f_1 - f_2\|_{p}$. The bound then follows from Theorem \ref{thm:STwasstab} in general, and from Corollary \ref{cor:infstabcor} when $p = \infty$.
\end{proof}

 We now consider simplicial convolutional persistence.

\begin{definition}
	Let $X$ be an $n \times k$ matrix. For $1 \leq p \leq \infty$, we define the $p$-operator norm of $X$ as follows:
	\[\|A\|_{p}^{*} = \sup_{x \neq 0} \frac{\|A x\|_{p}}{\|x\|_{p}}. \]
It is a standard result that operator norms are submultiplicative, meaning that $\|X_{1}X_{2}\|_{p}^{*} \leq \|X_{1}\|_{p}^{*}\|X_2 \|_{p}^{*}$, where $X_1$ and $X_2$ are any two matrices that can be multiplied.
\end{definition}

\begin{proposition}
\label{prop:simpstability}
Let $K$ be a simplicial complex and $f_{1},f_{2}: K^{0} \to \mathbb{R}^{d}$ two functions on its vertex set, encoded by matrices $X_1$ and $X_2$. Write $n = \vert K^{0} \vert$, and take $A$ to be the $n \times n$ matrix used in convolutional persistence. Take a weight vector $w \in \mathbb{R}^d$. Then, computing persistence over the complex $K^m$, for any $p\geq 1$ we have:
\begin{align*}
	W_{p}(\operatorname{Diag}(\widehat{f_1 \ast w}),\operatorname{Diag}(\widehat{f_2 \ast w}))  & \leq\\ (3^{m}-2^{m})\|A\|_{p}^{*} \|X_{1} - X_{2}\|_{p}^{*}\|w\|_{p}.
\end{align*} 

When $p = \infty$, the bound can be made tighter:
\begin{align*}
	W_{\infty}(\operatorname{Diag}(\widehat{f_1 \ast w}),\operatorname{Diag}(\widehat{f_2 \ast w}))  & \leq\\ \|A\|_{\infty}^{*} \|X_{1} - X_{2}\|_{\infty}^{*}\|w\|_{\infty}.
\end{align*} 
\end{proposition}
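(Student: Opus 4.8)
The plan is to exploit the \emph{linearity} of the convolution: the two filtered vertex functions $f_1 \ast w$ and $f_2 \ast w$ are by definition the vectors $AX_1 w$ and $AX_2 w$ in $\mathbb{R}^n$, so their difference is exactly $A(X_1 - X_2)w$. Since both are then pushed through the same lower-* extension and the same persistence pipeline, I would reduce the problem to two independent steps: first, bound the $L^p$ norm of the vertex-level difference $A(X_1-X_2)w$ by the product of operator norms; second, feed that bound into the stability machinery already assembled in the excerpt to pass from the norm of the difference of filter functions to the $p$-Wasserstein (or bottleneck) distance between their diagrams. This mirrors the structure of the proof of Proposition \ref{prop:stability}, with Young's convolutional inequality replaced by operator-norm submultiplicativity.

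For the first step I would simply unwind the definition of the $p$-operator norm twice. Writing the difference of the vertex-value vectors as $A(X_1-X_2)w$, the defining inequality $\|Mx\|_p \le \|M\|_p^* \|x\|_p$ gives $\|(X_1-X_2)w\|_p \le \|X_1 - X_2\|_p^* \|w\|_p$, and applying it again with $M = A$ yields
\[ \|A(X_1-X_2)w\|_p \le \|A\|_p^* \,\|X_1 - X_2\|_p^* \,\|w\|_p. \]
This is precisely the submultiplicativity recorded in the definition preceding the proposition, and it produces the three norm factors on the right-hand side of both claimed bounds.

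The second step splits according to $p$. When $p = \infty$, I would invoke Corollary \ref{cor:infstabcor}: the lower-* extension is non-expansive in the sup norm, so the bottleneck distance between $\operatorname{Diag}(\widehat{f_1 \ast w})$ and $\operatorname{Diag}(\widehat{f_2 \ast w})$ is bounded by $\|f_1 \ast w - f_2 \ast w\|_\infty = \|A(X_1-X_2)w\|_\infty$, with no additional constant; chaining with the first step gives the tighter $p = \infty$ bound. For general $p \ge 1$ I would instead appeal to the Skraba--Turner result (Theorem \ref{thm:STwasstab}), which is what supplies the $(3^m - 2^m)$ prefactor as one passes from the vertex-level $L^p$ norm of the filter functions to the simplex-wise $L^p$ norm of their lower-* extensions on the $m$-skeleton.

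The main obstacle is exactly this last invocation. Theorem \ref{thm:STwasstab} is phrased for grayscale functions extended to a $d$-dimensional \emph{cubical} complex, where $3^m - 2^m$ counts the cofaces through which each vertex value propagates under the extension. Transplanting it to an abstract simplicial $K$ with an arbitrary matrix $A$ requires care, since in full generality a vertex of $K$ may lie in unboundedly many $m$-simplices, and the naive coface-counting bound on $\|\widehat{f_1 \ast w} - \widehat{f_2 \ast w}\|_p$ in terms of $\|f_1 \ast w - f_2 \ast w\|_p$ need not be controlled by $3^m - 2^m$. I would therefore make explicit the regularity of $K^m$ under which the cubical-style coface bound applies (for instance, a uniform bound on the number of cofaces per vertex), and either restrict to that setting or absorb the genuine coface constant into the statement; everything else is a routine chaining of the two steps above.
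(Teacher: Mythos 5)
Your proposal follows the paper's own proof essentially verbatim: the paper likewise writes $f_1 \ast w - f_2 \ast w = A(X_1 - X_2)w$, applies submultiplicativity of the $p$-operator norm (together with the observation that $\|v\|_{p}^{*} = \|v\|_{p}$ for column vectors) to obtain $\|A(X_1-X_2)w\|_{p} \leq \|A\|_{p}^{*}\|X_1-X_2\|_{p}^{*}\|w\|_{p}$, and then concludes ``as before'' by citing Theorem~\ref{thm:STwasstab} for general $p$ and Corollary~\ref{cor:infstabcor} for $p=\infty$. The reservation in your final paragraph is well-founded, but it exposes a gap in the paper rather than in your argument: the paper invokes Theorem~\ref{thm:STwasstab} --- which is stated only for cubical complexes built on pixel grids --- for an abstract simplicial complex without comment, even though a vertex of a general $K^m$ can lie in arbitrarily many simplices, so the coface-counting constant need not be $3^m - 2^m$; your proposed remedy (a uniform coface bound per vertex, or replacing the constant by the true coface count, with the $p=\infty$ case via Corollary~\ref{cor:infstabcor} unaffected) is strictly more careful than the published proof.
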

\begin{proof}
The difference $f_{1} \ast w - f_{2} \ast w$ can be written as $A(X_1 - X_2)w$. Submultiplicativity of the $p$-operator norm shows that $\|A(X_1 - X_2)w\|_{p}^{*} \leq \|A\|_{p}^{*}\|(X_1 - X_2)\|_{p}^{*}\|w\|_{p}^{*}$. For column vectors $v$, it is easy to see that $\|v\|_{p}^{*} = \|v\|_{p}$, i.e. the $p$-operator norm is equal to the usual $p$-norm. Thus we can write
\[\|f_1 \ast w - f_2 \ast w\|_{p} = \|A(X_1 - X_2)w\|_{p} \leq \|A\|_{p}^{*}\|(X_1 - X_2)\|_{p}^{*}\|w\|_{p}.\]
As before, the bound then follows from Theorem \ref{thm:STwasstab} in general, and from Corollary \ref{cor:infstabcor} when $p = \infty$.
\end{proof}

As all matrix norms are equivalent, the quantity $\|X_1 - X_2\|_{p}^{*}$ is bounded above and below by a constant multiple of $\|X_1 - X_2\|_{p}$, the matrix $p$-Frobenius norm, providing for stability in this norm too.

\subsection{Inverse Theory}

 We begin this section by recalling the definition of the persistent homology transform and some of its injectivity properties. We then prove that the persistent homology transform can be extended to functions defined on abstract simplicial complexes, and that the resulting transform also has strong inverse properties. Finally, we use this new theorem to prove inverse results for our two convolutional persistence transforms.

\begin{definition}[\citep{turner2014persistent}]
	Let $M \subset \mathbb{R}^d$ be a finite simplicial complex. For every vector $v \in \mathbb{S}^{d-1}$, we can define the function $f_{v}(x) = \langle x,v \rangle$. The persistent homology transform $PHT(M)$ is then defined as the map from $\mathbb{S}^{d-1}$ to the space of persistence diagrams that send a vector $v$ to the persistence diagram of $(S,f_{v})$. One can define a similar invariant using Euler curves instead of persistence diagrams, called the Euler characteristic transform (ECT).
\end{definition}

The PHT was shown to be injective for $d=2,3$ in \citep{turner2014persistent}. Later work \citep{ghrist2018persistent,curry2018many} proved injectivity in all dimensions, for both the PHT and ECT, for a very general class of subsets that includes simplicial complexes.\\

\begin{theorem}[\citep{turner2014persistent,ghrist2018persistent,curry2018many}]
	\label{thm:phtinj}
	Let $K,L \subset \mathbb{R}^d$ be two embedded simplicial complexes. If $PHT(K) = PHT(L)$ or $ECT(K) = ECT(L)$ then $K=L$.
\end{theorem}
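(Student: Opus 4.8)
The plan is to reduce both hypotheses to a single statement about the ECT, and then invert the ECT using Euler integral calculus.

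First I would observe that the PHT determines the ECT, so that it suffices to treat the ECT alone. For a fixed direction $v \in \mathbb{S}^{d-1}$, the Euler characteristic curve $t \mapsto \chi\!\left(K \cap \{x : \langle x,v\rangle \leq t\}\right)$ can be read directly off the persistence diagrams of $(K,f_v)$: one has $\chi(t) = \sum_k (-1)^k \beta_k(t)$, where $\beta_k(t)$ is the number of bars in the dimension-$k$ diagram that contain $t$. Hence $PHT(K) = PHT(L)$ implies $ECT(K) = ECT(L)$, and the whole theorem follows from injectivity of the ECT.

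Second, I would recast the ECT in the language of Euler calculus. Writing $\mathbf{1}_K$ for the constructible indicator function of $K$, each value of the transform is an Euler integral over a half-space,
\[
\chi\!\left(K \cap \{x : \langle x,v\rangle \leq t\}\right) = \int_{\{\langle \cdot,\, v\rangle \leq t\}} \mathbf{1}_K \, d\chi .
\]
Taking the jumps of this step function in $t$ converts integration over half-spaces into integration over affine hyperplanes, so that knowing $ECT(K)$ is equivalent to knowing the hyperplane Radon transform $R\mathbf{1}_K(H) = \int_H \mathbf{1}_K \, d\chi$ of the constructible function $\mathbf{1}_K$, as $H$ ranges over all affine hyperplanes of $\mathbb{R}^d$.

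Third, I would invert this Radon transform. Schapira's inversion formula for constructible functions shows that composing the hyperplane Radon transform with its dual yields, up to an explicit constant determined by the Euler characteristics of the point–hyperplane incidence fibers, a multiple of the identity plus a correction by the total Euler integral; concretely this produces the explicit pointwise recovery of $\mathbf{1}_K(x)$ from the ECT obtained by Curry--Mukherjee--Turner and by Ghrist--Levanger--Mai. Since $\mathbf{1}_K = \mathbf{1}_L$ as constructible functions forces $K = L$ as subsets of $\mathbb{R}^d$, injectivity of the ECT, and hence the theorem, follows. The main obstacle is precisely this inversion step: one must make rigorous the passage from half-space data to hyperplane data and verify the hypotheses of Schapira's formula (that the incidence kernel relating points to hyperplanes has fibers of constant, nonvanishing Euler characteristic) so that the transform is genuinely invertible. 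This requires the full apparatus of constructible functions and Euler integration in place of ordinary homology, and is where essentially all the difficulty lies; by comparison, the reduction of the PHT to the ECT and the passage from \emph{$\mathbf{1}_K = \mathbf{1}_L$} to \emph{$K = L$} are routine.
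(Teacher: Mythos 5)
The paper does not prove this theorem at all: it is imported wholesale from the cited references (Turner--Mukherjee--Boyer for $d=2,3$; Curry--Mukherjee--Turner and Ghrist--Levanger--Mai in general), and your sketch is precisely the argument of those references --- reduce the PHT to the ECT via $\chi(t)=\sum_k(-1)^k\beta_k(t)$, pass from half-space Euler integrals to the hyperplane Radon transform of $\mathbf{1}_K$, and invert via Schapira's formula, so in that sense you have reconstructed ``the paper's proof'' faithfully. One correction to your statement of the key hypothesis: Schapira's inversion does not require the point-fibers to have nonvanishing Euler characteristic --- the fiber over a point is the set of affine hyperplanes through it, a copy of $\mathbb{RP}^{d-1}$, whose Euler characteristic \emph{does} vanish when $d$ is even --- but rather that $\mu-\lambda\neq 0$, where $\mu=\chi(\mathbb{RP}^{d-1})$ is the Euler characteristic of the fiber over one point and $\lambda=\chi(\mathbb{RP}^{d-2})$ is that of the set of hyperplanes through two distinct points; since one of these is $1$ and the other $0$ depending on the parity of $d$, one always has $\mu-\lambda=\pm 1$, and the inversion (identity times $\mu-\lambda$, plus the correction term $\lambda\int\mathbf{1}_K\,d\chi$, which is itself determined by the ECT) goes through in every dimension.
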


Now, given a complex (simplicial or cubical) $K$ with a function $f:K^{0} \to \mathbb{R}^d$, we can define an analogous topological transform. 

\begin{definition}
	Let $K$ be a complex (simplicial or cubical) with a function $f:K^{0} \to \mathbb{R}^d$ defined on its vertices, extending to a function $\widehat{f}$ on $K$ via the lower-* filtration. For $v \in \mathbb{S}^{d-1}$, we can define the function $\widehat{f} \cdot v : K \to \mathbb{R}$. We then define the PHT of $(K,f)$ to be the map on $\mathbb{S}^{d-1}$ that sends a vector $v \in \mathbb{S}^{d-1}$ to the persistence diagram of $(K,\widehat{f} \cdot v)$. We define the ECT similarly.
\end{definition}

We now prove an injectivity result for the PHT and ECT of such complex-function pairs $(K,f)$. This result shows that the PHT and ECT are injective up to a simple equivalence relation which we now define.

\begin{definition}
	For a complex $K$ with a function $f:K^{0}\to \mathbb{R}^d$, we write $f^{\triangle}$ to denote the function $f^{\triangle}: \operatorname{geom}(K) \to \mathbb{R}^d$ defined on the geometric realization of $K$ via linear interpolation. This differs from $\widehat{f}$, which is defined via the discrete lower-* rule. Note that we use the notation $\operatorname{geom}(K)$ as opposed to the more traditional $\vert K \vert$ to avoid confusion with the cardinality of $K$ as a set.
\end{definition}

\begin{definition}
	Let $(K,f)$ and $(L,g)$ be two complexes with $\mathbb{R}^d$-valued functions defined on their vertex sets. We will say that $(K,f)$ and $(L,g)$ are \emph{re-discretization equivalent} if there is a homeomorphism $h: \operatorname{geom}(K) \to \operatorname{geom}(L)$ with $f^{\triangle} = g^{\triangle} \circ h$. In other words, $(K,f)$ and $(L,g)$ induce the same geometric object and locally linear map.
\end{definition}

\begin{theorem}
	\label{thm:simpphtinj}
	Suppose that $(K,f)$ and $(L,g)$ are two complexes with $\mathbb{R}^d$-valued functions defined on their vertex sets, and suppose further that $f^{\triangle}$ and $g^{\triangle}$ are injective embeddings of $\operatorname{geom}(K)$ and $\operatorname{geom}(L)$ into $\mathbb{R}^d$. Then $PHT(K,f) = PHT(L,g)$ or $ECT(K,f) = ECT(L,g)$ implies that $f^{\triangle}(\operatorname{geom}(K)) = g^{\triangle}(\operatorname{geom}(L))$, i.e. $(K,f)$ and $(L,g)$ are re-discretization equivalent, with $h = g^{\triangle} \circ (f^{\triangle})^{-1}$.
\end{theorem}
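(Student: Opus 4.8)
The plan is to reduce the statement to the injectivity of the classical persistent homology transform, Theorem \ref{thm:phtinj}, by recognizing that the transform $PHT(K,f)$ of the abstract pair is nothing but the classical $PHT$ of the embedded image $M_K := f^{\triangle}(\operatorname{geom}(K)) \subset \mathbb{R}^d$ (and likewise $M_L := g^{\triangle}(\operatorname{geom}(L))$). Since $f^{\triangle}$ and $g^{\triangle}$ are injective embeddings, and after triangulating any cubical complexes via the Freudenthal construction of Section \ref{sec:imagecomplex} so that $f^{\triangle}$ is affine on each simplex, the images $M_K$ and $M_L$ are genuine embedded simplicial complexes to which Theorem \ref{thm:phtinj} applies.

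First I would fix a direction $v \in \mathbb{S}^{d-1}$ and identify the two persistence diagrams $PHT(K,f)(v)$ and $PHT(M_K)(v)$. The height function $\ell_v(x) = \langle x, v\rangle$ generating the classical transform is affine on $\mathbb{R}^d$, so its restriction to a simplex of $M_K$ with vertices $f(w_0), \dots, f(w_j)$ is the affine interpolation of the scalar vertex values $f(w_i)\cdot v$; in other words, $\ell_v|_{M_K}$ is exactly the linear-interpolation extension of the scalar vertex function $w \mapsto f(w)\cdot v$. Proposition \ref{prop:twofilt} then equates this continuous extension with the discrete lower-* extension, giving $\operatorname{Diag}(M_K, \ell_v) = \operatorname{Diag}(M_K, \widehat{f\cdot v})$. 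Finally, lower-* persistence depends only on the abstract filtered complex and its vertex values, not on the embedding, so $\operatorname{Diag}(M_K, \widehat{f\cdot v}) = \operatorname{Diag}(K, \widehat{f\cdot v}) = PHT(K,f)(v)$, where $\widehat{f\cdot v}$ is read as the lower-* extension of the scalarized vertex function. Chaining these equalities over all $v$ gives $PHT(K,f) = PHT(M_K)$ and symmetrically $PHT(L,g) = PHT(M_L)$.

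With this identification the conclusion is immediate: the hypothesis $PHT(K,f) = PHT(L,g)$ forces $PHT(M_K) = PHT(M_L)$, and Theorem \ref{thm:phtinj} yields $M_K = M_L$, i.e. $f^{\triangle}(\operatorname{geom}(K)) = g^{\triangle}(\operatorname{geom}(L))$. Both $f^{\triangle}$ and $g^{\triangle}$ are homeomorphisms onto this common image, so $h := (g^{\triangle})^{-1}\circ f^{\triangle}$ is a homeomorphism $\operatorname{geom}(K) \to \operatorname{geom}(L)$ with $g^{\triangle}\circ h = f^{\triangle}$, exhibiting re-discretization equivalence. The ECT case runs identically, replacing each persistence diagram with its Euler curve and invoking the ECT half of Theorem \ref{thm:phtinj}.

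The main obstacle is the per-direction identification $PHT(K,f)(v) = PHT(M_K)(v)$, and in particular justifying that the discrete lower-* diagram of the abstract pair coincides with the classical height-function diagram of the embedded image. Two hypotheses do the real work and should be used carefully: the injectivity of $f^{\triangle}$ is what guarantees $M_K$ is an embedded complex without self-intersections, so that Proposition \ref{prop:twofilt} is applicable, and the affineness of the height functions is what makes $\ell_v|_{M_K}$ literally equal to the linear interpolation rather than merely close to it. I would also flag the notational reading that $\widehat{f}\cdot v$ must mean the lower-* extension of the scalar function $f\cdot v$ (scalarize, then extend), since extending componentwise and then dotting with $v$ would not in general reproduce the height function and would break the argument; and I would note that the Freudenthal triangulation of any cubical input is needed so that linear interpolation produces flat simplices rather than ruled cubical patches.
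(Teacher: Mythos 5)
Your proof is correct and takes essentially the same route as the paper's: both arguments establish, for each direction $v$, that $PHT(K,f)(v)$ equals the classical height-function diagram of the embedded image $f^{\triangle}(\operatorname{geom}(K))$ by combining Proposition \ref{prop:twofilt} (discrete lower-* versus linear interpolation) with the identification of filtrations under the homeomorphism $f^{\triangle}$, and then invoke Theorem \ref{thm:phtinj}. Your extra care — the scalarize-then-extend reading of $\widehat{f}\cdot v$, triangulating cubical inputs so interpolation is affine on simplices, and writing $h = (g^{\triangle})^{-1}\circ f^{\triangle}$ (which, unlike the composition order stated in the theorem, actually typechecks as a map $\operatorname{geom}(K)\to\operatorname{geom}(L)$) — tightens points the paper glosses over but does not change the approach.
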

\begin{proof}
For a weight vector $w \in \mathbb{S}^{d-1}$, the functions $\hat{f} \cdot w$ and $f^{\triangle}$ are identical on the vertex set $K^{0}$, but differ in their extensions to $K$ or $\operatorname{geom}(K)$, as the former is discrete and the latter continuous. However, Proposition \ref{prop:twofilt} shows that they give the same persistence diagrams (and hence, also, Euler characteristic curves). It is, moreover, easy to see that the filtration on $\operatorname{geom}(K)$ given by $f^{\triangle} \cdot w$ is identical to the filtration on $f^{\triangle}(\operatorname{geom}(K))$ given by $f_{w}(x) = x \cdot w$, identifying $\operatorname{geom}(K)$ with $f^{\triangle}(\operatorname{geom}(K))$ by dint of the fact that $f^{\triangle}$ is a homeomorphism. Thus, the PHT of $(K,f)$ contains the same data as the PHT of $f^{\triangle}(\operatorname{geom}(K))$ (and the same is true for the ECT), and so we can apply Theorem \ref{thm:phtinj}.
\end{proof}

In order to use this theorem to prove inverse results for convolutional persistence, we need one more technical lemma.

\begin{definition}
	A collection of points $S$ in $\mathbb{R}^d$ are in \emph{general position} if no hyperplane intersects $S$ in more than $d$ points.
\end{definition}

\begin{remark}
	It is easy to see that the property of being in general position is invariant under translation. Moreover, if a set $S$ in general position in $\mathbb{R}^d$ contains the origin as a point $s_0 = \vec{0} \in S$, then any set of $d$ vectors in $S \setminus s_0$ is linearly independent.
\end{remark}

\begin{remark}
	Being in general position is a generic property of point sets, meaning that the subset of $(\mathbb{R}^d)^{S}$ corresponding to subsets in general position is open and dense.
\end{remark}

\begin{lemma}
	\label{lem:genposemb}
	Let $S$ be the vertex set of an $m$-dimensional simplicial complex $K^{m}$. Let $\iota : S \hookrightarrow \mathbb{R}^{M}$ be an embedding with $\iota(S)$ in general position. Then if $M \geq 2m+1$, the embedding extends by interpolation to an injective embedding $\iota^{\triangle}:K^{m} \hookrightarrow \mathbb{R}^{M}$. Similarly, let $S$ is vertex set of an $m$-dimensional cubical complex $L^{m}$ and  $\iota : S \hookrightarrow \mathbb{R}^{M}$ an embedding with $\iota(S)$ in general position. Then if $M \geq 2^{m+1}-1$, the embedding extends by interpolation to an injective embedding $\iota^{\triangle}:L^{m} \hookrightarrow \mathbb{R}^{M}$.
\end{lemma}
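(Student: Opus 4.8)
The plan is to reduce injectivity of the interpolated map to an affine-independence statement about the vertex images, and then to observe that the two dimension bounds are precisely what is needed to guarantee that statement for pairs of simplices, respectively pairs of cubes. First I would check that $\iota^{\triangle}$ is well-defined and continuous: on each simplex (resp. cube) it is given by linear (resp. multilinear) interpolation of the vertex images, and on a shared face the two interpolations restrict to the same map, so the pieces glue to a continuous map on $\operatorname{geom}(K^m)$. Since $K^m$ is a finite complex, $\operatorname{geom}(K^m)$ is compact, so once injectivity is shown the map is automatically a topological embedding. Thus the entire content is injectivity.

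The key step is to extract from general position the claim that any $2m+2$ of the points $\iota(S)$ (resp. any $2^{m+1}$ in the cubical case) are affinely independent. Using the remark preceding the lemma---translate so that one point sits at the origin, whereupon the definition forces any $M$ of the remaining vectors to be linearly independent---general position implies that every subset of at most $M+1$ points of $\iota(S)$ is affinely independent. Since $M \geq 2m+1$ gives $M+1 \geq 2m+2$, and $M \geq 2^{m+1}-1$ gives $M+1 \geq 2^{m+1}$, the combined vertex images of any two simplices, at most $(m+1)+(m+1) = 2m+2$ points, or of any two cubes, at most $2\cdot 2^m = 2^{m+1}$ points, are affinely independent.

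With affine independence in hand I would prove injectivity by coefficient matching. Suppose $\iota^{\triangle}(x) = \iota^{\triangle}(y)$, where $x$ lies in the relative interior of a cell $\sigma$ and $y$ in the relative interior of a cell $\tau$. Expanding through interpolation weights gives $\sum_{v \in \sigma^0}\lambda_v \iota(v) = \sum_{w \in \tau^0}\mu_w \iota(w)$, with nonnegative weights summing to one, and with all weights strictly positive precisely because $x,y$ sit in the relative interiors (for cubes these are the multilinear weights). The total coefficient of $\sum_{v}\lambda_v \iota(v) - \sum_{w}\mu_w \iota(w) = 0$ is $1 - 1 = 0$, so affine independence of $\sigma^0 \cup \tau^0$ forces every coefficient to vanish. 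In particular any vertex in $\sigma^0 \setminus \tau^0$ would carry weight $\lambda_v = 0$, contradicting positivity, so $\sigma^0 = \tau^0$ and hence $\sigma = \tau$; then $\lambda_v = \mu_v$ for all $v$. For simplices the matching weights are barycentric coordinates and so $x = y$; for cubes the weight vector determines the multilinear coordinates and hence the point, so again $x = y$.

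The main obstacle, and the place where the two bounds genuinely diverge, is the cubical case. Because a cube is not a simplex, I must interpolate multilinearly rather than barycentrically, confirm that the multilinear weights of a relative-interior point are all positive and that they recover the point uniquely, and count vertices correctly: two $m$-cubes contribute up to $2^{m+1}$ vertices, which is exactly why the threshold is $M \geq 2^{m+1}-1$ rather than the linear count $2m+2$ governing the simplicial case. A secondary point to pin down is the degenerate situation where $\vert S \vert < M+1$, in which the passage from ``any $M+1$ points independent'' to independence of smaller subsets must be read with care; generically $\vert S \vert \geq M+1$ and the issue does not arise.
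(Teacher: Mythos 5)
Your proof is correct and takes essentially the same route as the paper's: both reduce injectivity to affine independence of the union of the vertex images of any two cells (at most $2m+2$ points in the simplicial case, $2^{m+1}$ in the cubical case), which general position supplies via the translate-one-point-to-the-origin trick. Your coefficient-matching step simply makes explicit what the paper compresses into the phrase ``nontrivial intersection'' (including the correct treatment of cells sharing a face), and your flag about the degenerate case $\vert S\vert < M+1$ is a legitimate caveat that applies equally to the paper's own argument.
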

\begin{proof}
	We begin with the simplicial complex. A nontrivial intersection of two faces $\iota(\sigma_1)$ and $\iota(\sigma_2)$ for $\sigma_1,\sigma_2 \leq K^{m}$ implies that the union of their vertex sets $\iota(\sigma_1^{0} \cup \sigma_2^{0})$ is not linearly dependent (this includes the case when the two faces are identical, i.e. a self-intersection), and in fact this dependence relation has at least two nonzero terms, as a dependence relation with only one nonzero term corresponds to a vertex sitting at the origin, which does not reflect any intersection of faces. Translating one of the vertices in $\iota(\sigma_1^{0} \cup \sigma_2^{0})$ to the origin and dropping it from the dependence relation, we are still left with a nontrivial dependence relationship among the remaining vertices. Now, a face of $K^{m}$ has at most $m+1$ vertices, so the union of two faces involves at most $2m+2$ vertices. If $M \geq 2m+1$, translating one of these vertices to the origin means that the remaining vertices are linearly independent, which contradicts the possibility of such a dependence relation.\\
	
	The proof is similar with the cubical complex. A face of $L^m$ contains at most $2^{m}$ vertices, so the union of two faces contains at most $2^{m+1}$ vertices. If $M \geq 2^{m+1}-1$, translating one of these $2^{m+1}$ points to the origin makes the remaining $2^{m+1}-1$ linearly independent, an hence no intersection can occur.
\end{proof}

We now prove our inverse results for convolutional persistence.

\subsection*{Simplicial Convolutional Persistence}

Recall that for a function $f: K^{0} \to \mathbb{R}^d$ defined on the vertex set of simplicial complex and encoded by an $\vert K^{0} \vert \times d$ matrix $X$, an arbitrary fixed $\vert K^{0} \vert \times \vert K^{0} \vert$ matrix $A$, and a $d \times 1$ weight vector $w$, the simplicial convolutional persistence of the triple $(K,A,f)$ on $w$ is defined by first writing $f \ast w$ for the function on $K^{0}$ defined by $AXw$ and then computing the persistent homology of the lower-star extension $\widehat{f \ast w}$ on $K$. We now define a topological transform based on this convolution.

\begin{definition}
	Fix $f:K^{0} \to \mathbb{R}^d$ and $A$ as in the definition of simplicial convolutional persistence. Define the convolutional persistence transform to be the mapping $CPT(K,f,A):\mathbb{S}^{d-1} \to \mathbf{Diagrams}$ that sends a weight vector $w \in \mathbb{S}^{d-1}$ to the persistence diagram $\operatorname{PH}(K,\widehat{f \ast w})$. One can similarly define an \emph{Euler characteristic curve} version of this invariant, the \emph{Convolutional Euler Characteristic Transform} (CECT), by computing Euler characteristic curves instead of persistence diagrams.
\end{definition}

\begin{definition}
	We will say that a property is \emph{generically} true if it is true when restricted to some open, dense set of parameters. For example, with simplicial convolutional persistence, the parameters are the function $f$, encoded by the matrix $X$, and the generalized adjacency matrix $A$.
\end{definition}

\begin{proposition}
	Assume that $d \geq \operatorname{dim}(K)+1$. Then $f^{\triangle}$ is generically an injective embedding of $\operatorname{geom}(K)$, and so the CPT and CECT of $(K,f,A)$ determine $(K,f)$ \emph{up to re-discetization}.
\end{proposition}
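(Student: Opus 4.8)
The plan is to recognize the convolutional persistence transform as an ordinary persistent homology transform of the convolved data, and then apply Theorem \ref{thm:simpphtinj}; the only genuinely new content is the generic injectivity of the associated embedding. Write $g := AX$ for the $n \times d$ matrix encoding the convolved vertex function $g : K^{0} \to \mathbb{R}^d$. Then $CPT(K,f,A)$ sends $w \in \mathbb{S}^{d-1}$ to $\operatorname{PH}(K, \widehat{gw})$, where $gw$ is a scalar function on $K^{0}$. Since linear interpolation commutes with the linear operation of dotting by $w$, the interpolation of the scalar function $gw$ equals $g^{\triangle} \cdot w$; combining this with Proposition \ref{prop:twofilt} exactly as in the proof of Theorem \ref{thm:simpphtinj} gives $CPT(K,f,A)(w) = \operatorname{PH}(\operatorname{geom}(K),\, g^{\triangle} \cdot w)$. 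Thus $CPT(K,f,A)$ is the abstract-complex PHT of $(K,g)$, and likewise $CECT(K,f,A)$ is the ECT of $(K,g)$, so it suffices to show that $g^{\triangle}$ is generically an injective embedding and then invoke Theorem \ref{thm:simpphtinj}.

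First I would establish generic injectivity. By the remark following the definition of general position, the configurations of $n$ points in $\mathbb{R}^d$ in general position form an open dense set, and Lemma \ref{lem:genposemb} upgrades general position of the vertex images to injectivity of the interpolated map once $d$ is large enough. The wrinkle is that the relevant point configuration is the set of rows of $g = AX$, which depends bilinearly on the two parameters $X$ and $A$. To see that general position of the rows of $AX$ is still open and dense in $(X,A)$-space, note that invertibility is generic for the square matrix $A$, and that for invertible $A$ the map $X \mapsto AX$ is a linear isomorphism of configuration space; it therefore pulls the open dense general-position locus back to an open dense set. Hence for generic $(X,A)$ the rows of $AX$ are in general position, and Lemma \ref{lem:genposemb} makes $g^{\triangle}$ an injective embedding of $\operatorname{geom}(K)$.

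With injectivity established, Theorem \ref{thm:simpphtinj} applied to $(K,g)$ shows that $CPT(K,f,A)$ (respectively $CECT(K,f,A)$) recovers $g^{\triangle}(\operatorname{geom}(K))$, i.e. $(K,g)$ up to re-discretization; since $A$ is fixed and known, this is the same data as $(K,f)$. I expect the main obstacle to be the genericity bookkeeping around the product $AX$: one must exclude degenerate $A$ (for instance $A = 0$, for which $AX$ is never in general position) and confirm that the bilinear dependence preserves both openness and density. A secondary point to resolve is the exact dimension threshold, which should be read off from Lemma \ref{lem:genposemb} rather than assumed — its simplicial case requires $d \geq 2\operatorname{dim}(K)+1$, which is stronger than the stated $d \geq \operatorname{dim}(K)+1$, so either the hypothesis must be strengthened or a finer embedding argument supplied.
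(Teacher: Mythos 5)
Your proof follows the paper's own argument step for step: the paper's proof likewise consists of asserting that general position of the rows of $AX$ is a generic property, invoking Lemma \ref{lem:genposemb} to conclude that $f^{\triangle}$ is injective on the geometric realization, and then applying Theorem \ref{thm:simpphtinj}. Your additional bookkeeping — identifying the CPT with the PHT of $(K,AX)$ via Proposition \ref{prop:twofilt}, and reducing genericity in $(X,A)$ to the invertible-$A$ case — merely fills in steps the paper leaves implicit, and is sound. Your closing concern is well-founded, and the gap it points to lies in the paper, not in your argument: the paper supplies no finer embedding argument, but cites Lemma \ref{lem:genposemb} directly, whose simplicial case requires $d \geq 2\operatorname{dim}(K)+1$, while the proposition assumes only $d \geq \operatorname{dim}(K)+1$. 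In fact the statement is false as written: take $K$ to be the complete graph on five vertices and $d = 2 = \operatorname{dim}(K)+1$; since $\operatorname{geom}(K)$ is non-planar, no map $f^{\triangle}$ into $\mathbb{R}^2$ can be injective, generically or otherwise. So your strengthened hypothesis $d \geq 2\operatorname{dim}(K)+1$ is the correct repair, and with it your proof is complete.
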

\begin{proof}
It is a generic property of the matrix $AX$ that its rows are in general position in $\mathbb{R}^d$. Lemma \ref{lem:genposemb} then implies that $f^{\triangle}$ is injective on the geometric realization. We can then apply Theorem \ref{thm:simpphtinj}.
\end{proof}

\subsection*{Image Convolutional Persistence}
We begin by defining our topological transform. 

\begin{definition}
	Fix $f: P \to \mathbb{R}$. The \emph{Convolutional Persistence Transform}  is the mapping $CPT(f): \mathbb{S}^{\vert B\vert-1} \to \mathbf{Diagrams}$ that sends an $L^2$-normalized function $g: B \to \mathbb{R}$ to the persistence diagram of $f \ast g$ on $K_{R}^{m}$. One can similarly define an \emph{Euler characteristic curve} version of this invariant, the \emph{Convolutional Euler Characteristic Transform} (CECT), by computing Euler characteristic curves instead of persistence diagrams. Using multi-channel images $f: P \to \mathbb{R}^n$, we can define an analogous CPT and CECT. In this setting, $g$ ranges over all normalized functions $B \to \mathbb{R}^n$, which can be identified with $\mathbb{S}^{\vert B\vert n-1}$.
\end{definition}

\begin{remark}
	Taking a trivial convolution where $B$ consists of a single vertex $b$ and $g(b)=1$, we can recover the original persistence diagram of $f$. Thus, the CPT is a strictly more general construction than ordinary persistence. 
\end{remark}

Next, we show how to associate $(P,f)$ with a Euclidean embedding of $K_{R}$.

\begin{definition}
	For a fixed function $f:P \to \mathbb{R}^n$, we obtain a mapping $\iota_{f}$ of the rectangle $R$ into $\mathbb{R}^{\vert B\vert n}$ by sending every point $r \in R$ to the vector $\{f(b + k \odot r) \mid b \in B\}$. In other words, $r$ corresponds to a translate of $B$, and $\iota_{f}(r)$ records the values of $f$ restricted to this translate. This is technically a set, rather than a vector, but it becomes a vector after fixing an order on the elements of $B$. Such a mapping can be extended via the lower-* rule to a map $\widehat{\iota_{f}}:K_{R} \to \mathbb{R}^{\vert B \vert n}$ on the entire cubical complex $K_{R}$ built on top of $R$. See Figure \ref{fig:embedding}.
\end{definition}

\begin{figure}
	\centering
	\includegraphics{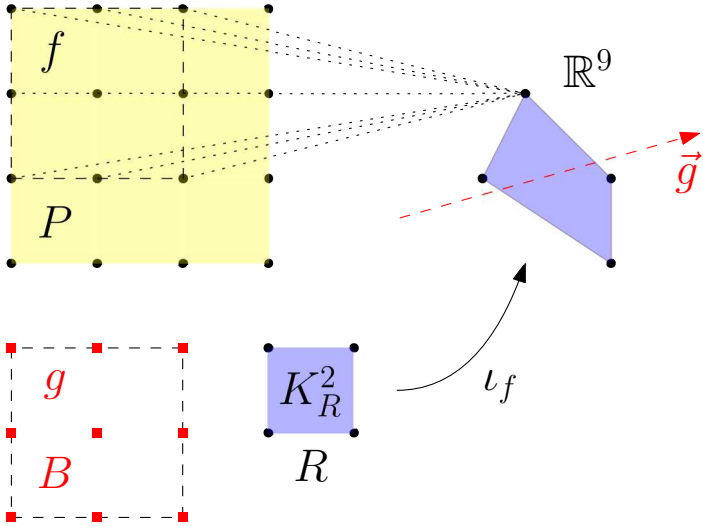}
	\caption{The function $f$ is defined on the $4 \times 4$ grid on the top left. The box $B$ is $3 \times 3$, and using a $(1,1)$-stride there are four ways of laying $B$ over the domain of $f$, so that $R$ is a $2 \times 2$ grid. We can map the vertices of $R$ into $\mathbb{R}^9$ by associating each vertex of $R$ with its corresponding translate of $B$, and then taking as coordinates the values of $f$ in that translate. This extends via interpolation to a map $\iota_{f}$ from the complex $K_{R}^2$ into $\mathbb{R}^9$, which here is shown to be an embedding.}
	\label{fig:embedding}
\end{figure}

\begin{figure}
	\centering
	\includegraphics{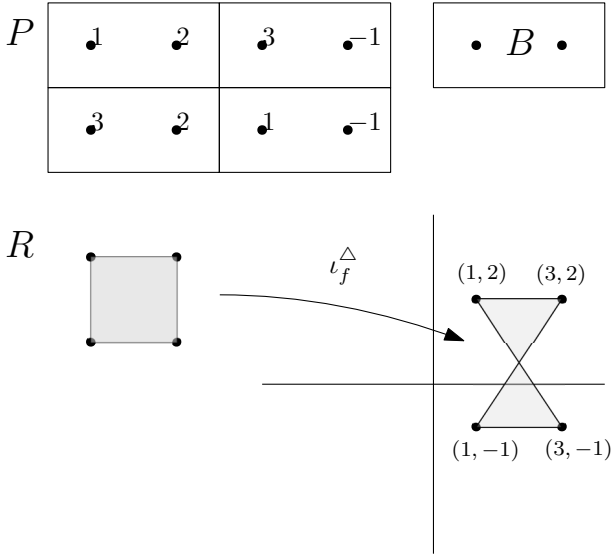}
	\caption{The grid $P$ has shape $2 \times 4$, whereas $B$ has shape $1 \times 2$. Using a convolutional of stride $(2,1)$, there convolutional grid $R$ has shape $2 \times 2$. The values of the function $f$ are indicated in the grid $P$. This induces a mapping $\iota_{f}$ from the vertices of $R$ into $\mathbb{R}^2$, where the top-left vertex of $R$ gets sent to $(1,2)$, the top-right to $(3,-1)$, the bottom-left to $(3,2)$ and the bottom-right to $(1,-1)$. Extending this via interpolation to the complex $K_{R}^{2}$ results in some self-intersections, so that $\iota_{f}^{\triangle}$ is not a homeomorphism on to its image.}
	\label{fig:PCT}
\end{figure}

It is easily seen that, fixing the rectangles $P$ and $B$ and the stride vector $k$, that $(K_{R},\widehat{\iota_{f}})$ uniquely determines $f$, and moreover that $CPT(f) = PHT(K_{R},\widehat{\iota_{f}})$. To obtain an inverse result in this context, we must show that the continuous interpolation $\iota_{f}^{\triangle}$ is generically injective on $\operatorname{geom}(K_{R}^m)$. See Figure \ref{fig:PCT}.

\begin{lemma}
	Let $\kappa = \Pi_{i}k_i$, where $k_i$ is the $i$th entry of the stride vector $k$, and suppose that $n\kappa  \geq 2^{m+1}-1$. Then a generic function $f$ has the property that  $\iota_{f}^{\triangle}$ is injective on the $m$-skeleton $K_{R}^m$, so that $\iota_{f}^{\triangle}(\operatorname{geom}(K_{R}^m))$ has the structure of a cubical complex isomorphic to $K_{R}^m$.
	\label{lem:genemb}
\end{lemma}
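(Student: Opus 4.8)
The plan is to reduce the statement to the cubical case of Lemma~\ref{lem:genposemb}. Recall that $\iota_f$ maps the vertex set $R$ into $\mathbb{R}^{n\vert B\vert}$, so once we know that the point set $\{\iota_f(r) : r \in R\}$ is in general position and that the ambient dimension is at least $2^{m+1}-1$, that lemma promotes the vertex embedding to an injective embedding $\iota_f^{\triangle}$ of the whole $m$-skeleton $K_R^m$; injectivity of $\iota_f^{\triangle}$ is exactly the assertion that the image is a cubical complex isomorphic to $K_R^m$. The dimension requirement is supplied by the hypothesis, since the codomain has dimension $n\vert B\vert \geq n\kappa \geq 2^{m+1}-1$ (the stride does not exceed the filter in the usual regime, so $\vert B\vert \geq \kappa$). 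Thus the entire content of the lemma is that a \emph{generic} $f$ places $\iota_f(R)$ in general position, after which Theorem~\ref{thm:simpphtinj} applies to the resulting embedding.

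Since general position is cut out by the nonvanishing of finitely many determinantal polynomials in the entries of $f$ (one for each subset of vertices that might conspire to lie on a common hyperplane), the set of bad $f$ is Zariski-closed in the space of functions $P \to \mathbb{R}^n$; hence it suffices to exhibit a \emph{single} $f$ realizing general position. First I would record an obstruction-free observation: for any fixed finite set of vertices there is \emph{no} affine dependence among their images valid for all $f$ simultaneously, because in each coordinate slot $b \in B$ the entries $f(b + k \odot r)$ sample $f$ at the distinct lattice points $b + k \odot r$ and so vary independently as $r$ ranges. This rules out a universal dependence, but — and this is the subtle point — it does not by itself produce a good $f$, since the coordinates of distinct $\iota_f(r)$ are \emph{not} freely choosable: overlapping translates $B + k \odot r$ force neighboring images to share entries.

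The crux, and the reason the hypothesis is phrased through $\kappa$ rather than the naive $\vert B\vert$, is building such an $f$ in the presence of this coordinate sharing. By the proof of Lemma~\ref{lem:genposemb}, the only clusters that must be separated are the vertex sets of pairs of $m$-cubes, i.e.\ at most $2^{m+1}$ points. The mechanism I would use is that each such vertex owns a \emph{private region}: lattice points lying in its own translate but in none of the others of the cluster, arising from the ``top $k_i$ slices'' in the active coordinate directions, whose count is governed by the volume $\kappa$ of the stride's fundamental domain and the $n$ channels. Perturbing $f$ on these private coordinates moves one image without disturbing the rest, and when $n\kappa \geq 2^{m+1}-1$ there are enough independent private directions to spread the $\leq 2^{m+1}$ vertices of any two $m$-cubes into affinely general position. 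The main obstacle I anticipate is precisely the bookkeeping here: verifying that across an arbitrary cluster one always has, collectively, the $2^{m+1}-1$ independent difference directions needed to break every potential coplanarity (not merely every pairwise coincidence), even when some ``interior'' vertex of a dense cluster has no private region of its own and must be handled by placing it at the base point while the remaining vertices are spread using their private coordinates.
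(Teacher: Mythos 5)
Your reduction to the cubical case of Lemma~\ref{lem:genposemb} is the right frame, and you have correctly located both the obstacle (overlapping translates of $B$ force distinct images $\iota_{f}(r)$ to share coordinates) and the role of $\kappa$: the stride carves out, for each $r \in R$, a corner block $B^{*} + (k \odot r)$ of $\kappa$ lattice points per channel. But the step you build on this is false as stated, and it is exactly where your anticipated ``bookkeeping'' obstacle lives. Perturbing $f$ at a lattice point $p$ in the private block of a vertex $r$ does \emph{not} move only $\iota_{f}(r)$: that same $p$ typically lies in $B + (k \odot r')$ for neighboring $r'$ (just not in their corner blocks), so the perturbation also moves $\iota_{f}(r')$ in its non-corner coordinates. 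Hence ``moves one image without disturbing the rest'' fails in $\mathbb{R}^{n\vert B\vert}$, and the cluster-by-cluster spreading argument does not go through as described. Your worry about an ``interior'' vertex with no private region is also unfounded: the corner blocks $B^{*} + (k \odot r)$, of size exactly $k_1 \times \cdots \times k_d$, are pairwise disjoint over \emph{all} $r \in R$ (they tile the stride lattice), so every vertex owns one; this disjointness, not any cluster analysis, is the crucial fact, and it is also why the hypothesis is phrased through $\kappa$ rather than $\vert B \vert$.

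The repair is a projection, and it is precisely what the paper does: let $\pi : \mathbb{R}^{n\vert B\vert} \to \mathbb{R}^{n\kappa}$ be the coordinate projection onto the corner-block coordinates. Because the blocks $B^{*} + (k \odot r)$ are pairwise disjoint, the tuple $\left(\pi \circ \iota_{f}(r)\right)_{r \in R}$ is a \emph{free} function of $f$: each projected image can be prescribed arbitrarily and independently, so these points are generically in general position in $\mathbb{R}^{n\kappa}$. Since $\pi$ is linear, $\pi \circ \iota_{f}^{\triangle} = (\pi \circ \iota_{f})^{\triangle}$, so Lemma~\ref{lem:genposemb} with $M = n\kappa \geq 2^{m+1}-1$ makes $\pi \circ \iota_{f}^{\triangle}$ injective, and injectivity of the composition forces injectivity of $\iota_{f}^{\triangle}$. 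If you prefer to stay inside your single-good-$f$/Zariski framework, the same projection finishes that argument too: affine independence of projected points implies affine independence of the points themselves, so any $f$ whose projected images are in general position already makes every subset of at most $2^{m+1}$ points of $\iota_{f}(R)$ affinely independent, which is all the proof of Lemma~\ref{lem:genposemb} uses; no cluster bookkeeping is ever needed.
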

\begin{proof} Let $B^{*} \subseteq B$ consist of those elements in the top-left $k_1 \times k_2 \times \cdots k_d$ corner of $B$. The various translates $B^{*} + (k \odot r)$ are all disjoint subsets of $P$. To show that $\iota_{f}^{\triangle}$ is injective, it suffices to show that it is injective when composed with the coordinate projection $\pi: \mathbb{R}^{\vert B \vert n} \to \mathbb{R}^{\vert B^{*} \vert n}$. However, since the translates of $B^{*}$ are disjoint, a choice of $\pi \circ \iota_{f}$ is equivalent to choosing an arbitrary vector in $\mathbb{R}^{\vert B^* \vert n}$ for each pixel in $R$ (a collection which will generically be in \emph{general position}), and the map $\pi \circ \iota_{f}^{\triangle} = (\pi \circ \iota_{f})^{\triangle}$ is obtained by linearly interpolating this mapping on the interior of the higher-order simplices of $K_{R}^{m}$. By Lemma \ref{lem:genposemb}, if $\vert B^{*} \vert n \geq 2^{m+1}-1$, we know that $\pi \circ \iota_{f}^{\triangle}$ is injective, and hence $\iota_{f}^{\triangle}$ is injective.
\end{proof}

\begin{theorem}
	Fix $P,B,k$, and assume that $\kappa = \Pi_{i}k_i \geq 2^{m+1}-1$. Then, generically, $CPT(f) = CPT(g)$ implies $f=g$, and the same is true for the CECT.
	\label{thm:thmcptim}
\end{theorem}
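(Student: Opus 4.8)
The plan is to exhibit $CPT(f)$ as the persistent homology transform of a Euclidean embedding of $K_R$ and then feed this into the injectivity machinery of Theorem \ref{thm:simpphtinj}. First I would pin down the identification flagged just before Lemma \ref{lem:genemb}. For a normalized filter $g \in \mathbb{S}^{\vert B\vert - 1}$, the convolution value $(f \ast g)(r) = \sum_{b \in B} g(b) f(b + r \odot k)$ is exactly the inner product $\langle \iota_f(r), g\rangle$, so the scalar vertex function $f \ast g$ on $R$ coincides with $\langle \iota_f(\cdot), g\rangle$. Passing to lower-* extensions and persistence, $CPT(f)(g) = \operatorname{PH}(K_R, \widehat{f \ast g})$ is precisely the diagram assigned to the direction $g$ by $PHT(K_R, \iota_f)$; letting $g$ range over the sphere gives the equality of maps $CPT(f) = PHT(K_R, \iota_f)$, and the identical bookkeeping yields $CECT(f) = ECT(K_R, \iota_f)$.

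Next I would invoke genericity. With a single channel the hypothesis $\kappa \geq 2^{m+1}-1$ is exactly the condition of Lemma \ref{lem:genemb}, so on an open dense set of functions the interpolation $\iota_f^{\triangle}$ is an injective embedding of $\operatorname{geom}(K_R^m)$ into $\mathbb{R}^{\vert B\vert}$. Restricting to this generic set for both $f$ and $g$, both $\iota_f^{\triangle}$ and $\iota_g^{\triangle}$ are embeddings, so the hypotheses of Theorem \ref{thm:simpphtinj} are met. An equality $CPT(f) = CPT(g)$, equivalently $PHT(K_R, \iota_f) = PHT(K_R, \iota_g)$ (and likewise for the CECT via the ECT), then forces re-discretization equivalence, i.e. equality of the embedded complexes $\iota_f^{\triangle}(\operatorname{geom}(K_R)) = \iota_g^{\triangle}(\operatorname{geom}(K_R)) =: Y$ inside $\mathbb{R}^{\vert B\vert}$, matched by a homeomorphism $h$ with $\iota_f^{\triangle} = \iota_g^{\triangle} \circ h$.

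The final and most delicate step is to upgrade this equality of embedded complexes to an equality of functions $f = g$. Since $\iota_f(r)$ records the values of $f$ on the translate $B + r \odot k$, and these translates jointly recover $f$ on $P$, it suffices to recover the \emph{labeled} vertex configuration $r \mapsto \iota_f(r)$, not merely the unlabeled image $Y$. Because Theorem \ref{thm:phtinj} returns $Y$ together with its cell structure, both $\iota_f^{\triangle}$ and $\iota_g^{\triangle}$ are cubical isomorphisms onto the same embedded complex $Y$, so the transition map $h = (\iota_g^{\triangle})^{-1} \circ \iota_f^{\triangle}$ is a \emph{cellular} automorphism of $K_R$, that is, a grid symmetry $\pi$ of $R$ with $\iota_f = \iota_g \circ \pi$. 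I expect this to be the main obstacle: a rectangular grid always carries nontrivial reflection symmetries (and rotations when square), so $\pi = \operatorname{id}$ cannot follow from combinatorics alone, and indeed a permutation of the boxes $B + r \odot k$ generally produces a distinct $g$ with the same transform.

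My proposed route past this obstacle is a general-position argument exploiting overlaps of the boxes. A pixel $p$ lying in two translates $B + r_1 \odot k$ and $B + r_2 \odot k$ imposes the constraint $f\!\left(p + (\pi(r_1) - r_1)\odot k\right) = f\!\left(p + (\pi(r_2) - r_2)\odot k\right)$, and for generic $f$ these constraints are simultaneously satisfiable across the grid only when $\pi$ is a common translation; the unique translation symmetry of a bounded grid is the identity, whence $\pi = \operatorname{id}$, $\iota_f = \iota_g$, and $f = g$. The CECT case is word-for-word identical, since Theorem \ref{thm:simpphtinj} yields re-discretization equivalence from either the PHT or the ECT. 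I would flag clearly that this last argument relies on the boxes overlapping; in regimes where the translates are pairwise disjoint the labeling is recovered only up to a grid symmetry of $R$, and the clean conclusion $f = g$ should be read accordingly.
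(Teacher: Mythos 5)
Your proposal tracks the paper's own proof for its first two steps: the paper likewise identifies $CPT(f)$ with $PHT(K_{R},\widehat{\iota_{f}})$ and then cites Lemma \ref{lem:genemb} together with Theorem \ref{thm:simpphtinj}. Where you diverge is the final step, and your version is the more careful one. The paper's entire argument for recovering $f$ is the assertion that the embedded image $\iota_{f}^{\triangle}(\operatorname{geom}(K_{R}^{m}))$ ``uniquely determines $f$.'' As you observe, Theorem \ref{thm:simpphtinj} only returns this image as an unlabeled embedded complex (equivalently, it returns $\iota_{f}$ up to precomposition with an automorphism of $K_{R}$), so recovering $f$ requires ruling out nontrivial relabelings. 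Your obstruction is genuine, not hypothetical: the theorem's hypotheses permit $B$ to have shape exactly $k_1 \times \cdots \times k_d$ (then $\vert B \vert = \kappa \geq 2^{m+1}-1$ and Lemma \ref{lem:genemb} still applies), in which case the translates $B + r \odot k$ are pairwise disjoint, and permuting the tiles of any $f$ by a nontrivial reflection $\pi$ of the grid $R$ produces $g \neq f$ with $\iota_{g} = \iota_{f} \circ \pi$, hence identical embedded images and identical CPT and CECT. Since this works for every $f$ not fixed by the relabeling, no open dense set of functions escapes it: in the disjoint-tile regime the statement must be read with a conclusion weakened to equality up to grid symmetry, exactly as you flag. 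Your overlap argument --- consistency of the relabeled values on a pixel shared by two translates forces equality of $f$ at distinct pixels, and generic $f$ avoids the finitely many proper affine subspaces these constraints cut out for each nontrivial automorphism $\pi$ --- supplies the missing step precisely where the paper is silent, and is what makes the clean conclusion $f = g$ correct when the translates overlap.

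One step you should still tighten: you assert that the transition homeomorphism supplied by re-discretization equivalence is automatically cellular. Re-discretization equivalence by itself gives only a homeomorphism of geometric realizations, so you need a short genericity argument that the cubical structure on the common image $Y$ is intrinsic --- for instance, that for generic $f$ the images of the vertices of $K_{R}^{m}$ are precisely the points where $Y$ fails to be locally flat --- after which the transition map does restrict to a grid automorphism $\pi$ of $R$ as you use. This is routine general position, in the same spirit as Lemma \ref{lem:genposemb}, but it is not free.
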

\begin{proof}
	Lemma \ref{lem:genemb} and Theorem \ref{thm:simpphtinj} imply that $CPT(f)$ or $CECT(f)$ determines $\iota_{f}^{\triangle}(\operatorname{geom}(K_{R}^m))$, which uniquely determines $f$.
\end{proof}

\begin{remark}
What we have just shown is that having a large stride vector, in addition to providing computational speedups by lowering the resolution of the resultant grid, also provides generic injectivity for persistence of higher-dimensional data complexes.
\end{remark}

The following corollary shows that to apply Theorem \ref{thm:thmcptim} one can ignore those filter functions orthogonal to image patches in our dataset. As \citep{carlsson2008local} observed, the space of local patches in natural images has very high codimension in the space of all possible patches, and hence in practice the CPT may not require a very high-dimensional collection of filter functions to be effective at distinguishing images.

\begin{definition}
For a given function $f:P \to \mathbb{R}^{d}$, we say that a filter function $g:B \to \mathbb{R}^{d}$ is \emph{convolutionally orthogonal} to $f$ if $f \ast g = 0$, i.e. $(f \ast g)(r) = 0$ for all $r \in R$. We write $f^{\ddagger}$ to indicate all functions convolutionally orthogonal to $f$.
\end{definition}

\begin{corollary}
\label{cor:orthoginv}
Let $f_1, f_2 : P \to \mathbb{R}^d$ be two functions, and write $\mathcal{G} = (f_{1}^{\ddagger} \cap f_{2}^{\ddagger})^{\perp}$, the orthogonal complement of the subspace of all functions convolutionally orthogonal to both $f_1$ and $f_2$. If $\operatorname{Diag}(\widehat{f_1 \ast g}) = \operatorname{Diag}(\widehat{f_2 \ast g})$ for all $g \in \mathcal{G}$, then $CPT(f_1) = CPT(f_2)$, and the same is true replacing persistence diagrams with Euler characteristic curves.
\end{corollary}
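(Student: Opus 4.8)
The plan is to exploit the fact that, for fixed $f$, the convolution $g \mapsto f \ast g$ is \emph{linear} in the filter, so that the space of filters splits orthogonally into a part annihilated by both $f_1$ and $f_2$ and the complementary part $\mathcal{G}$. First I would record, directly from the defining formula $(f \ast g)(v) = \sum_{p \in B} g(p) \cdot f(p + v \odot k)$, that $g \mapsto f \ast g$ is a linear map on the filter space $V \cong \mathbb{R}^{\vert B \vert d}$, so that $f^{\ddagger}$ is precisely its kernel and hence a linear subspace. Consequently $f_1^{\ddagger} \cap f_2^{\ddagger}$ is a subspace, and we have the orthogonal decomposition $V = (f_1^{\ddagger} \cap f_2^{\ddagger}) \oplus \mathcal{G}$. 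Writing an arbitrary filter as $g = g_0 + g_{\mathcal{G}}$ with $g_0 \in f_1^{\ddagger} \cap f_2^{\ddagger}$ and $g_{\mathcal{G}} \in \mathcal{G}$, linearity gives $f_i \ast g = f_i \ast g_0 + f_i \ast g_{\mathcal{G}} = f_i \ast g_{\mathcal{G}}$ for $i = 1,2$, since $g_0$ is convolutionally orthogonal to both. Thus both convolutions depend only on the $\mathcal{G}$-component of $g$.

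With this in hand, fix an arbitrary normalized filter $g \in \mathbb{S}^{\vert B \vert d - 1}$, which is what parametrizes $CPT$. Its projection $g_{\mathcal{G}}$ lies in $\mathcal{G}$, so the hypothesis applies directly to give $\operatorname{Diag}(\widehat{f_1 \ast g_{\mathcal{G}}}) = \operatorname{Diag}(\widehat{f_2 \ast g_{\mathcal{G}}})$; combined with $f_i \ast g = f_i \ast g_{\mathcal{G}}$ this yields $\operatorname{Diag}(\widehat{f_1 \ast g}) = \operatorname{Diag}(\widehat{f_2 \ast g})$, and since $g$ ranges over the whole sphere we conclude $CPT(f_1) = CPT(f_2)$. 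If instead one prefers to impose the hypothesis only on normalized elements of $\mathcal{G}$, I would invoke the equivariance of persistence under positive scaling: when $g_{\mathcal{G}} \neq 0$, setting $c = \|g_{\mathcal{G}}\|_2 > 0$ and $\tilde{g} = g_{\mathcal{G}}/c$, the lower-* rule commutes with multiplication by $c$, so $\widehat{f_i \ast g} = c\,\widehat{f_i \ast \tilde{g}}$ and each persistence point is scaled by $c$, whence equality at $\tilde{g}$ forces equality at $g$; the degenerate case $g_{\mathcal{G}} = 0$ makes both convolutions vanish, so both diagrams are trivial. The Euler characteristic version is identical, with the diagram rescaling replaced by the reparametrization $\alpha \mapsto \alpha/c$ of the Euler curve.

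I expect the only delicate point — rather than a genuine obstacle — to be the bookkeeping around normalization: $CPT$ is defined on the unit sphere whereas $\mathcal{G}$ is a subspace, so one must check that the scalar $c$ produced by projecting a unit filter onto $\mathcal{G}$ is always nonnegative and treat the case $g_{\mathcal{G}} = 0$ separately. Everything else follows immediately from the linearity of convolution in $g$ and the positive-scaling equivariance of sublevel-set persistence.
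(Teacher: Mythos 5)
Your proof is correct and takes essentially the same route as the paper's: decompose an arbitrary filter into its component in $\mathcal{G}$ and its component in $\mathcal{G}^{\perp} = f_1^{\ddagger} \cap f_2^{\ddagger}$, and use linearity of the convolution in the filter to reduce the claim to the hypothesis on $\mathcal{G}$, which is precisely the paper's one-line argument. Your extra bookkeeping about normalization and the degenerate case $g_{\mathcal{G}} = 0$ is harmless but not strictly needed, since the hypothesis quantifies over all of $\mathcal{G}$ (not just unit filters), so it applies directly to the projection $g_{\mathcal{G}}$.
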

\begin{proof}
If $f_1$ and $f_2$ have the same persistence diagram when convolving with a function $g \in \mathcal{G}$, they also have the same diagram after convolving with $g+h$ for $h \in \mathcal{G}^{\perp}$, due to the  linearity of convolution.

\end{proof}

\section{Experiments}
\label{sec:experiments}

In this section, we consider how convolutional persistence compares with ordinary persistence in machine learning applications. We focus on image classification, which involves image convolutional persistence, leaving experiments with simplicial convolutional persistence for future work. Our aim is not to argue that persistence-based methods are superior to other methods for the tasks at hand.
Nor we do try to demonstrate the computational advantages of downsampling, as this has already been shown in \citep{solomon2021fast}, so we use convolutions where the original image and the convolved image have the same dimensions, i.e. a unit stride vector $(1,1)$. In all our computations, we build a simplicial complex wherein the pixels are the top-dimensional simplices, rather than the vertices; this is the second construction in Section \ref{sec:imagecomplex}, and is in line with how the Giotto toolkit computes cubical persistence \citep{tauzin2021giotto}. Code for running experiments with convolutional persistence can be found at \href{https://github.com/yesolomon/convpers}{https://github.com/yesolomon/convpers}.\\

\subsection*{Classification Tasks}
In what follows, we consider five  datasets:
\begin{enumerate}
\item The UCI digits dataset \citep{alpaydin1998optical}. The images are $8 \times 8$. This dataset has 5620 images.
\item The MNIST dataset \citep{deng2012mnist}. The images are $28 \times 28$, and $5000$ random images were chosen for the dataset.
\item An MNIST-like dataset of Chinese digits \citep{Nazarpour2017}. There are 15 classes, for a baseline accuracy of $1/15 \approx 0.066$. See Figure \ref{fig:chinese_examples} for examples. To speed up computations, a $2 \times 2$ max pooling is applied before computing persistence. The original images are $64 \times 64$, and are $32 \times 32$ after max pooling. $5000$ random images were chosen the dataset.
\item An MNIST-like dataset of characters in the Devanagari script \citep{acharya2015deep}. There are 46 classes, for a baseline accuracy of $1/46 \approx 0.0217$. See Figure \ref{fig:devanagari_examples} for examples. The images are $32 \times 32$, and $5000$ random images were chosen for the dataset.
\item A dataset of solutions to the \emph{Kuramoto-Sivashinsky} PDE. This is a PDE of relevance in many systems exhibiting pattern formation \citep{cuerno1995dynamic,motta2012highly,villain1991continuum,wolf1991kinetic,golovin1998effect}. This PDE was studied in the context of topological machine learning by Adams et al. \citep{adams2017persistence}, in which they consider the anisotropic form of the PDE:
\[u_{t} = -\nabla^2 u - \nabla^2 \nabla^2 u + r (u_x)^2 + (u_y)^2.\]
Starting from random initial conditions, we solve these PDEs out to time $t=15$ for a range of $r$ values, $r \in \{1,1.25,1.5,1.75,2\}$. The classification task is then parameter estimation: giving a 2D image of the solution to the PDE, guess the $r$ parameter that produced it. See Figure \ref{fig:KS_examples} for examples. The images are $50 \times 50$, and $500$ images were generated for each $r$ value, giving a dataset of $2500$ points.
\end{enumerate}

\begin{figure}
\centering
\includegraphics[scale=0.35]{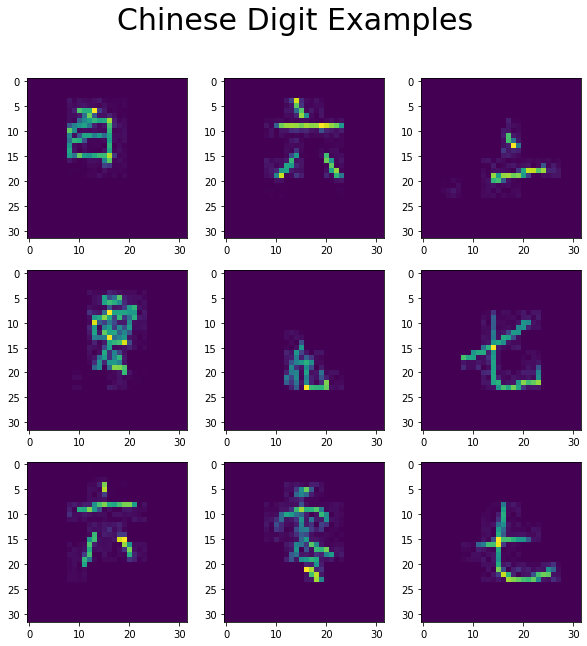}
\caption{Examples of chinese digit figures.}.
\label{fig:chinese_examples}
\end{figure}

\begin{figure}
\centering
\includegraphics[scale=0.35]{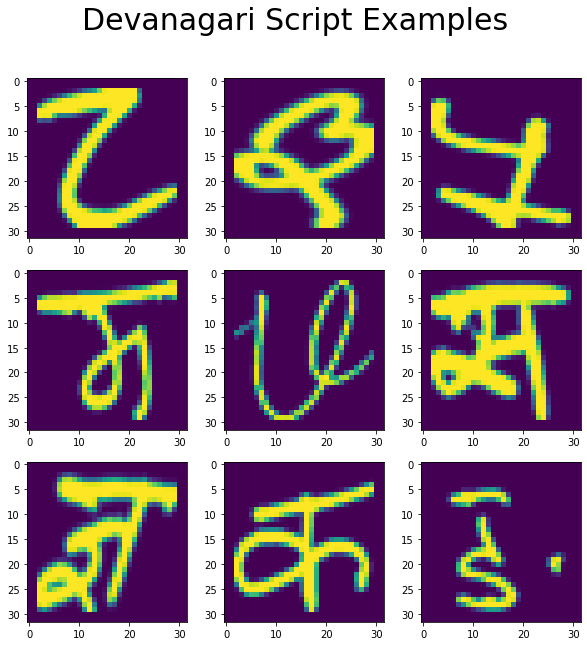}
\caption{Examples of Devanagari script figures.}.
\label{fig:devanagari_examples}
\end{figure}

\begin{figure}
\centering
\includegraphics[scale=0.23]{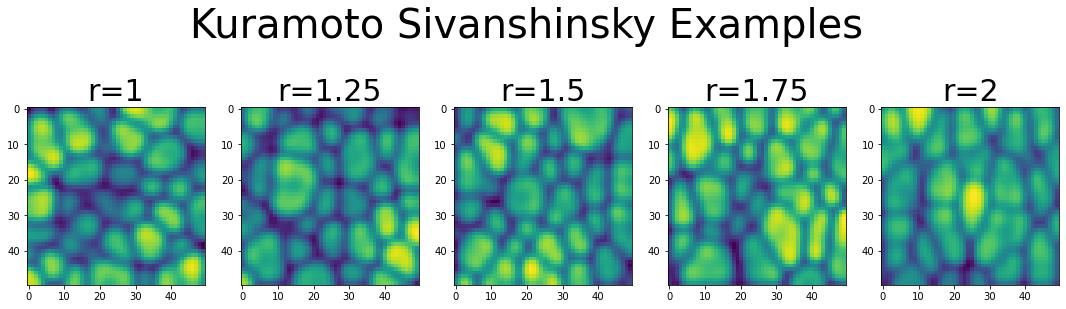}
\caption{Examples of Kuramoto-Sivashinsky figures. Note how the cells become vertically stretched for larger values of $r$.}.
\label{fig:KS_examples}
\end{figure}

In all of our datasets, the images are small enough that the convolutions are defined to preserve the resolution, rather than downsample. The goal here is to ignore the computational advantages of convolutional persistence and focus only on the gains in discriminative power. For each dataset, we consider five versions of the CPT:
\begin{itemize}
\item Using a trivial $1 \times 1$ filter $[1]$, which is equivalent to computing the persistence of the original image.
\item Adding, in addition to the trivial filter, three more ``standard" filters: sharpening, blur, and Gaussian kernels, all $3 \times 3$.
\item Applying PCA to the space of $3 \times 3$ image patches to obtain the top-three principal component patches, and then generating five kernels as random normalized linear combinations of these patches. This approach is motivated by Corollary \ref{cor:orthoginv}, which says that, for the purposes of injectivity, one can ignore filters orthogonal to those found in the patches of the dataset. We call the resulting kernels \emph{eigenfilters}.
\item Taking 5 random, normalized $3 \times 3$ filters.
\item Taking 25 random, normalized $3 \times 3$ filters.
\end{itemize}  

For each version of the CPT, we consider four vectorizations:
\begin{itemize}
\item Vectorize using persistence images (default setting, $10 \times 10$ resolution), concatenating across both homological dimensions and across filters. Thus, if there are $B$ filters, the resulting vector has length $200B$. See Figure \ref{fig:digit-persimg} for a visualization of the persistence images of an image in the Chinese digits dataset.
\item Vectorize using persistence images, as above, but average along filters rather than concatenating, giving a vector of length $200$.
\item Vectorize using total persistence, concatenating across both homological dimensions and across filters. Thus, if there are $B$ filters, the resulting vector has length $2B$.
\item Vectorize using total persistence, as above, but average along filters rather than concatenating, giving a vector of length $2$.
\end{itemize}

\begin{figure}
	\centering
	\includegraphics[scale=0.4]{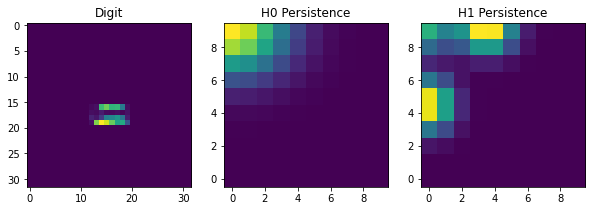}
	\caption{Zero- and one-dimensional persistence diagram for a chinese digit corresponding to ``2". The sublevelset filtrations begins by adding dark regions and then includes regions with increasingly larger values, indicated with brighter colors. Even with the 10x10 resolution, we see clearly that the dark region has one connected component and two persistent cycles.}
	\label{fig:digit-persimg}
\end{figure}

Finally, for each vectorization, we test the topological features on the classification task using the following three models:
\begin{itemize}
\item A $3$-nearest-neighbors classifier.
\item Gradient boosting trees, $10$ estimators.
\item A neural network model with two hidden layers with 100 units each, leakyRELU activations on the hidden layers and softmax on the final layer, and adam optimizer. Trains for 50 epochs with batch size 4. 
\end{itemize}

Though this series of experiments does not fully explore all the choices, applications, or features of the CPT, it is sufficiently diverse to allow for some interesting insights and the observation of recurrent patterns. Moreover, the experimental pipeline chosen is very simple and does not rely on any careful engineering of features or models.\\

In Table \ref{table:computing-time}, we indicate the time it takes to compute the persistence diagrams for all the images in the data set and transform the results into concatenated persistence image vectors. Other steps in the computational pipeline, like performing convolutions, are extremely fast, so the computing time for convolutional persistence using 10 filters is very close to 10 times the values found in the table. Distributing the persistence calculations across different processors, as well as convolving with larger stride vectors (to shrink image resolution) both provide major speedups (as discussed earlier).

\begin{table}[htb!]
	\begin{tabular}{|l|l|l|l|l|l|}
		\hline
		& UCI Digits & MNIST & Chinese Digits & Devanagari Script & KS PDEs \\ \hline
		processing time & 2.4s       & 28.6s & 22.8s          & 21.1s             & 21.6s   \\ \hline
	\end{tabular}
\label{table:computing-time}
\caption{Computing Time for Persistence Calculations}
\end{table}

As many components of the experimental set-up are stochastic, such as the random filters used and the initializations of the neural nets, we perform $10$ simulations for each combination of experimental hyperparameters, setting aside 20\% of the data as a testing set to compute model accuracy. For each choice of filters and vectorization, the model with the highest average is chosen, and the standard deviation of its accuracy across the $10$ simulations is shown in the bar plot. \emph{NN} indicates \emph{nearest neighbors}, \emph{Tree} is \emph{gradient boosting trees}, and \emph{DL} is \emph{deep learning}, i.e. neural networks.

\subsection*{Results}

\begin{itemize}
\item Digits: Convolutional persistence provides great improvements in predictive accuracy. The best results come from concatenating features rather than averaging them. Total persistence is very powerful when many random filters are used. When using only five filters, eigenfilters outperforms random filters. For this dataset, the best average accuracies were achieved using deep learning. See Figure \ref{fig:digits_results}.
\item MNIST: Convolutional persistence against boosts predictive accuracy by a large margin. Similar trends to the digits dataset, although the best models are not always neural networks. See Figure \ref{fig:mnist_results}.
\item Chinese digits: For this dataset, five eigenfilters are significantly better than five random filters. Moreover, the best vectorization is given by concatenating total persistence scores. See Figure \ref{fig:chinese_results}.
\item Devanagari: For this dataset, taking five random filters performs about as well as taking five eigenfilters. The best average performance is given by concatenating persistence images, but similar accuracy with lower standard deviation is given by a nearest neighbor classifier on the concatenated total persistence vectors. See Figure \ref{fig:dev_results}.
\item Kuramoto-Sivashinsky: The advantages of convolutional persistence are evident but less dramatic. As in the prior two experiments, the concatenated total persistence vectors for 25 random filters is significantly more informative than any vectorizations of ordinary persistence. Better performance (over 95\%!) is possible using neural networks on the concatenated persistence image vector, but this is less stable to some of the experimental hyperparameters. See Figure \ref{fig:KS_results}.
\end{itemize}

\subsection*{Observations \& Discussion}

We conclude this experimental section with a few comments.
\begin{itemize}
\item Random filters consistently work well at providing informative features, although there is little theoretical justification for why this should be the case. It is possible that this is related to Johnson-Lindenstrauss theory \citep{lindenstrauss1984extensions}, which shows that random projections are good at preserving the geometry of high-dimensional point clouds.
\item It is also surprising that total persistence is such an effective vectorization across multiple experiments, despite being so reductive.
\item Given that eigenfilters tend to outperform random filters, it would be interesting (and easy) to compare 25 random linear combinations of eigenfilters with 25 random filters.
\item In our eigenfilters experiments, we opt for taking multiple random combinations of the top eigenfilters, rather than using the eigenfilters themselves, and taking more of them. This was based on some theoretical heuristics and small-scale experiments, but there is otherwise no reason to suggest why one approach is generally superior to the other.
\item We did not apply any feature engineering before applying our machine learning models. For example, the concatenated persistence images vectors were fairly large, 5000-dimensional in the case of $25$ random filters, and performance might be improved by pre-applying a dimensionality reduction method like PCA.
\item Similarly, we did not do any engineering of our machine learning models, fixing a simple neural network architecture across all experiments. For the Kuramoto-Sivashinsky PDE dataset, the high variance of the neural network models suggests that significantly improved performance might be achievable using the right architecture and training regime.
\item Another hyper-parameter we did not test extensively is the size of the filters. Some preliminary experiments suggest that $4 \times 4$ filters outperform the $3 \times 3$ filters used here, but it remains to be seen if this phenomenon is robust.
\end{itemize}

\begin{figure}
\centering
\includegraphics[scale=0.35]{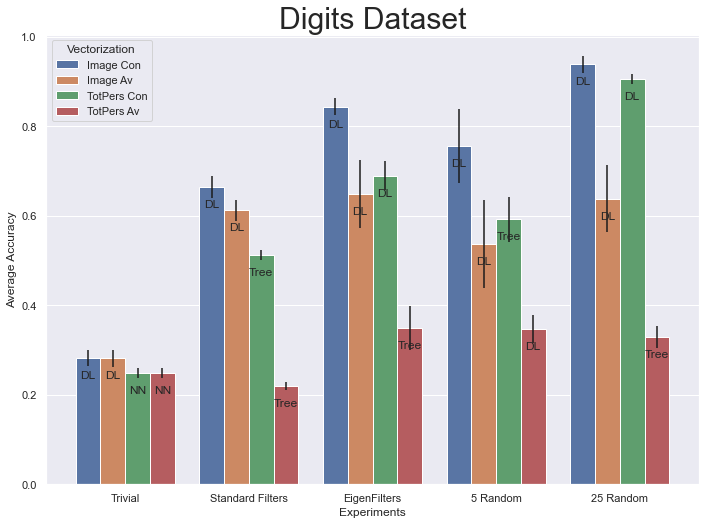}
\caption{Results for the digits dataset.}
\label{fig:digits_results}
\end{figure}

\begin{figure}
\centering
\includegraphics[scale=0.35]{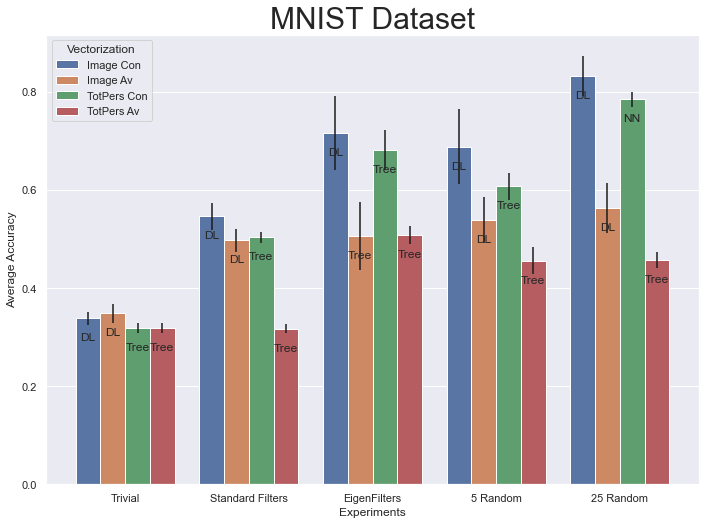}
\caption{Results for the MNIST dataset.}
\label{fig:mnist_results}
\end{figure}

\begin{figure}
\centering
\includegraphics[scale=0.35]{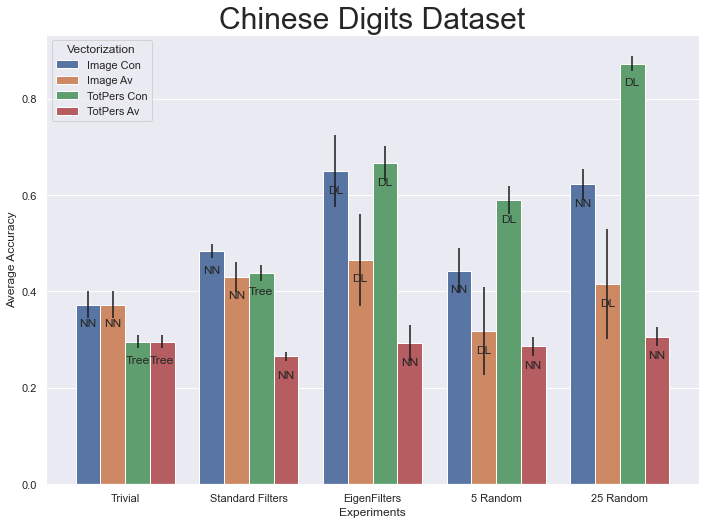}
\caption{Results for the chinese digits dataset.}
\label{fig:chinese_results}
\end{figure}

\begin{figure}
\centering
\includegraphics[scale=0.35]{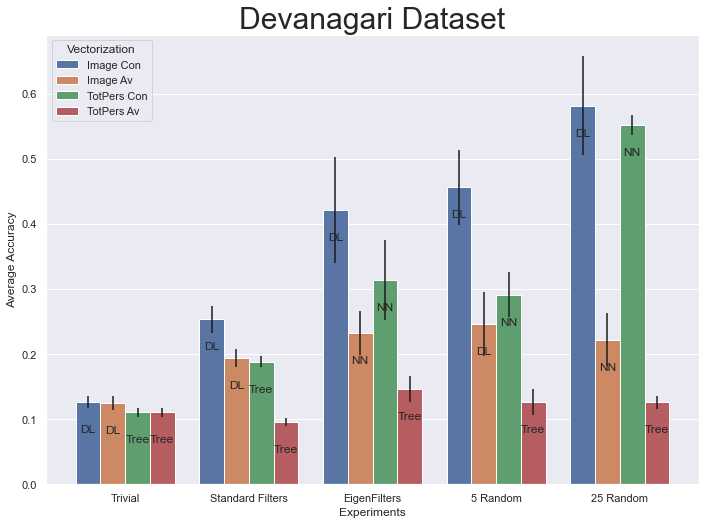}
\caption{Results for the Devanagari dataset.}
\label{fig:dev_results}
\end{figure}

\begin{figure}
\centering
\includegraphics[scale=0.35]{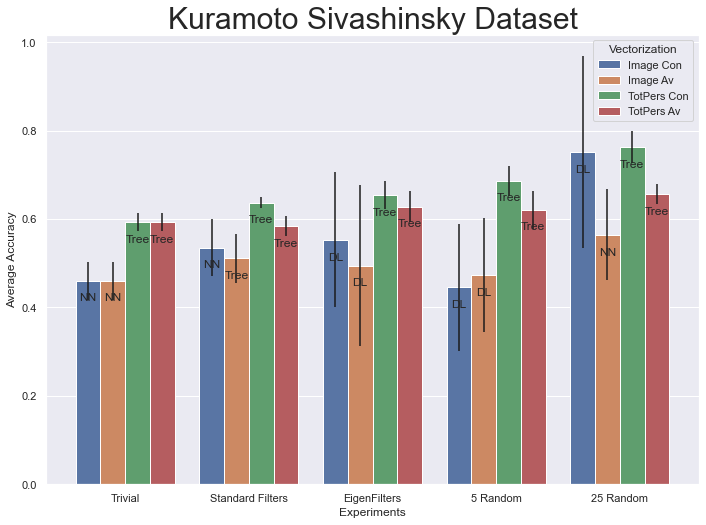}
\caption{Results for the Kuramoto-Sivashinsky dataset.}
\label{fig:KS_results}
\end{figure}

%

\section{Conclusion}
\label{sec:conclusion}
In this paper, we have proposed a novel approach to topological analysis of images and simplicial complexes, based on combining convolutions and persistent homology. Packaging this invariant as a topological transform, we have shown that this transform is injective in a generic sense by viewing it as a special case of the persistent homology transform. We have also provided diverse experiments showing that convolutions are effective at improving the predictive power of persistent homology for classification tasks. Looking forward, there are multiple directions for future research.

\begin{itemize}
\item The injectivity theory for the PHT has been extended by results bounding the number of vector directions needed to obtain injectivity for certain classes of shapes, see \citep{curry2018many,belton2018learning,fasy2019persistence,belton2020reconstructing}. It would be interesting to try and obtain similar, specialized results for the embedded cubical complexes appearing in this paper.
\item As of yet, there are no \emph{inverse stability} results for the PHT, as were obtained for the topological transform in \citep{solomon2021geometry} . That is, the regularity of the inverse map from topological features to shapes has not been investigated. It is possible that the restricted, discrete structure of the CPT provides an easier setting for studying this question.
\item Both versions of convolutional persistence explored in this paper are discrete. It would be interesting to study convolutional persistence in the continuous setting. The major obstacle to obtaining an inverse result in this setting is that the inverse theory for the persistent homology transform, as derived in  \citep{turner2014persistent,curry2018many,ghrist2018persistent}, is that one must work within an O-minimal structure of definable sets, so that one must prove that all Euclidean embeddings used are definable in a precise sense. This is immediate when working with discrete objects but presents a novel technical challenge otherwise, and so falls outside of the scope of this paper.
\item There are many choices to be made in the implementation of convolutional persistence, and it is not clear which give the best result. For example, whether using a big stride and many filters or a small stride and few filters is preferable. Another experiment is to consider whether it is best to work with eigenfilters directly, rather than random normalized linear combinations thereof, as we have done here.
\item  We did not consider a pipeline in which the convolutional filters are trainable parameters learned using the dataset. This is entirely possible, see \citep{carriere2021optimizing,solomon2021fast}, but presents multiple computational challenge, as topological optimization is expensive and unstable, and the topological energy landscape has many poor local minima. Still, this is an interesting direction for future work.
\item We considered persistence images and total persistence as vectorizations in our experimental pipeline. Other possible vectorizations include persistence landscapes \citep{bubenik2015statistical} and Euler characteristic curves.
\item The datasets explored here, though not synthetic, are fairly simplistic, and such classification tasks have been \emph{solved} by deep learning methods which surpasses human accuracy \citep{russakovsky2015imagenet}. A natural next step is to consider more challenging settings, particularly in the data-starved regime, where topological features have proved useful \citep{khramtsova2022rethinking} for obtaining start-of-the-art performance, and see if convolutional persistence can further boost the accuracy of such models. 
\item Finally, it would be interesting to experiment with multi-channel images (e.g. RGB$\alpha$), 3D images (as often arise in medical imaging), as well as datasets of simplicial complexes (like graphs, meshes).
\end{itemize}

\bibliography{sn-bibliography}

\begin{thebibliography}{71}
\providecommand{\natexlab}[1]{#1}
\providecommand{\url}[1]{{#1}}
\providecommand{\urlprefix}{URL }
\providecommand{\doi}[1]{\url{https://doi.org/#1}}
\providecommand{\eprint}[2][]{\url{#2}}
 \bibcommenthead

\bibitem[{Acharya et~al(2015)Acharya, Pant, and Gyawali}]{acharya2015deep}
Acharya S, Pant AK, Gyawali PK (2015) Deep learning based large scale
  handwritten devanagari character recognition. In: 2015 9th International
  conference on software, knowledge, information management and applications
  (SKIMA), IEEE, pp 1--6

\bibitem[{Adams et~al(2017)Adams, Emerson, Kirby, Neville, Peterson, Shipman,
  Chepushtanova, Hanson, Motta, and Ziegelmeier}]{adams2017persistence}
Adams H, Emerson T, Kirby M, et~al (2017) Persistence images: A stable vector
  representation of persistent homology. Journal of Machine Learning Research
  18

\bibitem[{Alpaydin and Kaynak(1998)}]{alpaydin1998optical}
Alpaydin E, Kaynak C (1998) Optical recognition of handwritten digits data set.
  UCI Machine Learning Repository

\bibitem[{Aukerman et~al(2020)Aukerman, Carri{\`e}re, Chen, Gardner,
  Rabad{\'a}n, and Vanguri}]{aukerman2020persistent}
Aukerman A, Carri{\`e}re M, Chen C, et~al (2020) Persistent homology based
  characterization of the breast cancer immune microenvironment: a feasibility
  study. In: 36th International Symposium on Computational Geometry (SoCG)

\bibitem[{Belton et~al(2018)Belton, Fasy, Mertz, Micka, Millman, Salinas,
  Schenfisch, Schupbach, and Williams}]{belton2018learning}
Belton RL, Fasy BT, Mertz R, et~al (2018) Learning simplicial complexes from
  persistence diagrams. arXiv preprint arXiv:180510716

\bibitem[{Belton et~al(2020)Belton, Fasy, Mertz, Micka, Millman, Salinas,
  Schenfisch, Schupbach, and Williams}]{belton2020reconstructing}
Belton RL, Fasy BT, Mertz R, et~al (2020) Reconstructing embedded graphs from
  persistence diagrams. Computational Geometry 90:101,658

\bibitem[{Bendich et~al(2020)Bendich, Bubenik, and
  Wagner}]{bendich2020stabilizing}
Bendich P, Bubenik P, Wagner A (2020) Stabilizing the unstable output of
  persistent homology computations. Journal of Applied and Computational
  Topology 4(2):309--338

\bibitem[{Bestvina and Brady(1997)}]{bestvina1997morse}
Bestvina M, Brady N (1997) Morse theory and finiteness properties of groups.
  Inventiones mathematicae 129(3):445--470

\bibitem[{Bleile et~al(2021)Bleile, Garin, Heiss, Maggs, and
  Robins}]{bleile2021persistent}
Bleile B, Garin A, Heiss T, et~al (2021) The persistent homology of dual
  digital image constructions. arXiv preprint arXiv:210211397

\bibitem[{Botnan and Lesnick(2022)}]{botnan2022introduction}
Botnan MB, Lesnick M (2022) An introduction to multiparameter persistence.
  arXiv preprint arXiv:220314289

\bibitem[{Bubenik and Wagner(2020)}]{bubenik2020embeddings}
Bubenik P, Wagner A (2020) Embeddings of persistence diagrams into hilbert
  spaces. Journal of Applied and Computational Topology 4(3):339--351

\bibitem[{Bubenik et~al(2015)}]{bubenik2015statistical}
Bubenik P, et~al (2015) Statistical topological data analysis using persistence
  landscapes. J Mach Learn Res 16(1):77--102

\bibitem[{Buchet et~al(2014)Buchet, Chazal, Dey, Fan, Oudot, and
  Wang}]{buchet2014topological}
Buchet M, Chazal F, Dey TK, et~al (2014) Topological analysis of scalar fields
  with outliers. arXiv preprint arXiv:14121680

\bibitem[{Calcina and Gameiro(2021)}]{calcina2021parameter}
Calcina SS, Gameiro M (2021) Parameter estimation in systems exhibiting
  spatially complex solutions via persistent homology and machine learning.
  Mathematics and Computers in Simulation 185:719--732

\bibitem[{Carlsson(2009)}]{carlsson2009topology}
Carlsson G (2009) Topology and data. Bulletin of the American Mathematical
  Society 46(2):255--308

\bibitem[{Carlsson et~al(2008)Carlsson, Ishkhanov, De~Silva, and
  Zomorodian}]{carlsson2008local}
Carlsson G, Ishkhanov T, De~Silva V, et~al (2008) On the local behavior of
  spaces of natural images. International journal of computer vision
  76(1):1--12

\bibitem[{Carri{\`e}re et~al(2015)Carri{\`e}re, Oudot, and
  Ovsjanikov}]{carriere2015stable}
Carri{\`e}re M, Oudot SY, Ovsjanikov M (2015) Stable topological signatures for
  points on 3d shapes. In: Computer graphics forum, Wiley Online Library, pp
  1--12

\bibitem[{Carriere et~al(2021)Carriere, Chazal, Glisse, Ike, Kannan, and
  Umeda}]{carriere2021optimizing}
Carriere M, Chazal F, Glisse M, et~al (2021) Optimizing persistent homology
  based functions. In: International conference on machine learning, PMLR, pp
  1294--1303

\bibitem[{Chung and Day(2018)}]{chung2018topological}
Chung YM, Day S (2018) Topological fidelity and image thresholding: A
  persistent homology approach. Journal of Mathematical Imaging and Vision
  60(7):1167--1179

\bibitem[{Chung et~al(2022)Chung, Day, and Hu}]{chung2022multi}
Chung YM, Day S, Hu CS (2022) A multi-parameter persistence framework for
  mathematical morphology. Scientific reports 12(1):1--25

\bibitem[{Cohen-Steiner et~al(2007)Cohen-Steiner, Edelsbrunner, and
  Harer}]{cohen2007stability}
Cohen-Steiner D, Edelsbrunner H, Harer J (2007) Stability of persistence
  diagrams. Discrete \& computational geometry 37(1):103--120

\bibitem[{Crawford et~al(2020)Crawford, Monod, Chen, Mukherjee, and
  Rabad{\'a}n}]{crawford2020predicting}
Crawford L, Monod A, Chen AX, et~al (2020) Predicting clinical outcomes in
  glioblastoma: an application of topological and functional data analysis.
  Journal of the American Statistical Association 115(531):1139--1150

\bibitem[{Cuerno and Barab{\'a}si(1995)}]{cuerno1995dynamic}
Cuerno R, Barab{\'a}si AL (1995) Dynamic scaling of ion-sputtered surfaces.
  Physical review letters 74(23):4746

\bibitem[{Curry(2018)}]{curry2018fiber}
Curry J (2018) The fiber of the persistence map for functions on the interval.
  Journal of Applied and Computational Topology 2(3):301--321

\bibitem[{Curry et~al(2018)Curry, Mukherjee, and Turner}]{curry2018many}
Curry J, Mukherjee S, Turner K (2018) How many directions determine a shape and
  other sufficiency results for two topological transforms. arXiv preprint
  arXiv:180509782

\bibitem[{Deng(2012)}]{deng2012mnist}
Deng L (2012) The mnist database of handwritten digit images for machine
  learning research [best of the web]. IEEE signal processing magazine
  29(6):141--142

\bibitem[{Di~Fabio and Ferri(2015)}]{di2015comparing}
Di~Fabio B, Ferri M (2015) Comparing persistence diagrams through complex
  vectors. In: International Conference on Image Analysis and Processing,
  Springer, pp 294--305

\bibitem[{Edelsbrunner and Harer(2010)}]{edelsbrunner2010computational}
Edelsbrunner H, Harer J (2010) Computational topology: an introduction

\bibitem[{Fasy et~al(2019)Fasy, Micka, Millman, Schenfisch, and
  Williams}]{fasy2019persistence}
Fasy BT, Micka S, Millman DL, et~al (2019) Persistence diagrams for efficient
  simplicial complex reconstruction. arXiv preprint arXiv:191212759 1:5

\bibitem[{Gabrielsson et~al(2020)Gabrielsson, Nelson, Dwaraknath, and
  Skraba}]{gabrielsson2020topology}
Gabrielsson RB, Nelson BJ, Dwaraknath A, et~al (2020) A topology layer for
  machine learning. In: International Conference on Artificial Intelligence and
  Statistics, PMLR, pp 1553--1563

\bibitem[{Gameiro et~al(2016)Gameiro, Hiraoka, and
  Obayashi}]{gameiro2016continuation}
Gameiro M, Hiraoka Y, Obayashi I (2016) Continuation of point clouds via
  persistence diagrams. Physica D: Nonlinear Phenomena 334:118--132

\bibitem[{Ghrist(2008)}]{ghrist2008barcodes}
Ghrist R (2008) Barcodes: the persistent topology of data. Bulletin of the
  American Mathematical Society 45(1):61--75

\bibitem[{Ghrist et~al(2018)Ghrist, Levanger, and Mai}]{ghrist2018persistent}
Ghrist R, Levanger R, Mai H (2018) Persistent homology and euler integral
  transforms. Journal of Applied and Computational Topology 2(1):55--60

\bibitem[{Giunti et~al(2021)Giunti, Houry, and Kerber}]{giunti2021average}
Giunti B, Houry G, Kerber M (2021) Average complexity of matrix reduction for
  clique filtrations. arXiv preprint arXiv:211102125

\bibitem[{Golovin and Davis(1998)}]{golovin1998effect}
Golovin AA, Davis SH (1998) Effect of anisotropy on morphological instability
  in the freezing of a hypercooled melt. Physica D: Nonlinear Phenomena
  116(3-4):363--391

\bibitem[{Hewitt and Ross(2012)}]{hewitt2012abstract}
Hewitt E, Ross KA (2012) Abstract Harmonic Analysis: Volume I Structure of
  Topological Groups Integration Theory Group Representations, vol 115.
  Springer Science \& Business Media

\bibitem[{Hiraoka et~al(2016)Hiraoka, Nakamura, Hirata, Escolar, Matsue, and
  Nishiura}]{hiraoka2016hierarchical}
Hiraoka Y, Nakamura T, Hirata A, et~al (2016) Hierarchical structures of
  amorphous solids characterized by persistent homology. Proceedings of the
  National Academy of Sciences 113(26):7035--7040

\bibitem[{Hu et~al(2019)Hu, Li, Samaras, and Chen}]{hu2019topology}
Hu X, Li F, Samaras D, et~al (2019) Topology-preserving deep image
  segmentation. Advances in neural information processing systems 32

\bibitem[{Jiang et~al(2020)Jiang, Kurtek, and Needham}]{jiang2020weighted}
Jiang Q, Kurtek S, Needham T (2020) The weighted euler curve transform for
  shape and image analysis. In: Proceedings of the IEEE/CVF Conference on
  Computer Vision and Pattern Recognition Workshops, pp 844--845

\bibitem[{Kaczynski et~al(2004)Kaczynski, Mischaikow, and
  Mrozek}]{kaczynski2004computational}
Kaczynski T, Mischaikow KM, Mrozek M (2004) Computational homology, vol~3.
  Springer

\bibitem[{Khramtsova et~al(2022)Khramtsova, Zuccon, Wang, and
  Baktashmotlagh}]{khramtsova2022rethinking}
Khramtsova E, Zuccon G, Wang X, et~al (2022) Rethinking persistent homology for
  visual recognition. arXiv e-prints pp arXiv--2207

\bibitem[{Kim et~al(2020)Kim, Kim, Zaheer, Kim, Chazal, and
  Wasserman}]{kim2020pllay}
Kim K, Kim J, Zaheer M, et~al (2020) Pllay: Efficient topological layer based
  on persistent landscapes. Advances in Neural Information Processing Systems
  33:15,965--15,977

\bibitem[{Kipf and Welling(2016)}]{kipf2016semi}
Kipf TN, Welling M (2016) Semi-supervised classification with graph
  convolutional networks. arXiv preprint arXiv:160902907

\bibitem[{Lacombe et~al(2018)Lacombe, Cuturi, and Oudot}]{lacombe2018large}
Lacombe T, Cuturi M, Oudot S (2018) Large scale computation of means and
  clusters for persistence diagrams using optimal transport. Advances in Neural
  Information Processing Systems 31

\bibitem[{Leygonie and Henselman-Petrusek(2021)}]{leygonie2021algorithmic}
Leygonie J, Henselman-Petrusek G (2021) Algorithmic reconstruction of the fiber
  of persistent homology on cell complexes. arXiv preprint arXiv:211014676

\bibitem[{Leygonie and Tillmann(2022)}]{leygonie2022fiber}
Leygonie J, Tillmann U (2022) The fiber of persistent homology for simplicial
  complexes. Journal of Pure and Applied Algebra p 107099

\bibitem[{Lindenstrauss(1984)}]{lindenstrauss1984extensions}
Lindenstrauss WJJ (1984) Extensions of lipschitz maps into a hilbert space.
  Contemp Math 26(189-206):2

\bibitem[{Maria et~al(2019)Maria, Oudot, and Solomon}]{maria2019intrinsic}
Maria C, Oudot S, Solomon E (2019) Intrinsic topological transforms via the
  distance kernel embedding. arXiv preprint arXiv:191202225

\bibitem[{Milosavljevi{\'c} et~al(2011)Milosavljevi{\'c}, Morozov, and
  Skraba}]{milosavljevic2011zigzag}
Milosavljevi{\'c} N, Morozov D, Skraba P (2011) Zigzag persistent homology in
  matrix multiplication time. In: Proceedings of the twenty-seventh Annual
  Symposium on Computational Geometry, pp 216--225

\bibitem[{Monod et~al(2019)Monod, Kalisnik, Patino-Galindo, and
  Crawford}]{monod2019tropical}
Monod A, Kalisnik S, Patino-Galindo J{\'A}, et~al (2019) Tropical sufficient
  statistics for persistent homology. SIAM Journal on Applied Algebra and
  Geometry 3(2):337--371

\bibitem[{Motta et~al(2012)Motta, Shipman, and Bradley}]{motta2012highly}
Motta FC, Shipman PD, Bradley RM (2012) Highly ordered nanoscale surface
  ripples produced by ion bombardment of binary compounds. Journal of Physics
  D: Applied Physics 45(12):122,001

\bibitem[{Nazarpour and Chen(2017)}]{Nazarpour2017}
Nazarpour K, Chen M (2017) {Handwritten Chinese Numbers}
  \doi{10.17634/137930-3},
  \urlprefix\url{https://data.ncl.ac.uk/articles/dataset/Handwritten_Chinese_Numbers/10280831}

\bibitem[{Otter et~al(2017)Otter, Porter, Tillmann, Grindrod, and
  Harrington}]{otter2017roadmap}
Otter N, Porter MA, Tillmann U, et~al (2017) A roadmap for the computation of
  persistent homology. EPJ Data Science 6:1--38

\bibitem[{Oudot and Solomon(2017)}]{oudot2017barcode}
Oudot S, Solomon E (2017) Barcode embeddings for metric graphs. arXiv preprint
  arXiv:171203630

\bibitem[{Oudot and Solomon(2020)}]{oudot2020inverse}
Oudot S, Solomon E (2020) Inverse problems in topological persistence. In:
  Topological Data Analysis. Springer, p 405--433

\bibitem[{Oudot(2015)}]{oudot2015persistence}
Oudot SY (2015) Persistence theory: from quiver representations to data
  analysis, vol 209. American Mathematical Society Providence

\bibitem[{Oudot(2017)}]{oudot2017persistence}
Oudot SY (2017) Persistence theory: from quiver representations to data
  analysis, vol 209. American Mathematical Soc.

\bibitem[{Pham et~al(2020)Pham, Le, Ho, Pham, and Bui}]{pham2020unbalanced}
Pham K, Le K, Ho N, et~al (2020) On unbalanced optimal transport: An analysis
  of sinkhorn algorithm. In: International Conference on Machine Learning,
  PMLR, pp 7673--7682

\bibitem[{Poulenard et~al(2018)Poulenard, Skraba, and
  Ovsjanikov}]{poulenard2018topological}
Poulenard A, Skraba P, Ovsjanikov M (2018) Topological function optimization
  for continuous shape matching. In: Computer Graphics Forum, Wiley Online
  Library, pp 13--25

\bibitem[{Russakovsky et~al(2015)Russakovsky, Deng, Su, Krause, Satheesh, Ma,
  Huang, Karpathy, Khosla, and Bernstein}]{russakovsky2015imagenet}
Russakovsky O, Deng J, Su H, et~al (2015) Imagenet large scale visual
  recognition challenge. International journal of computer vision
  115(3):211--252

\bibitem[{Skraba and Turner(2020)}]{skraba2020wasserstein}
Skraba P, Turner K (2020) Wasserstein stability for persistence diagrams. arXiv
  preprint arXiv:200616824

\bibitem[{Solomon et~al(2021{\natexlab{a}})Solomon, Wagner, and
  Bendich}]{solomon2021geometry}
Solomon E, Wagner A, Bendich P (2021{\natexlab{a}}) From geometry to topology:
  Inverse theorems for distributed persistence. arXiv preprint arXiv:210112288

\bibitem[{Solomon et~al(2022)Solomon, Wagner, and
  Bendich}]{solomon_et_al:LIPIcs.SoCG.2022.61}
Solomon E, Wagner A, Bendich P (2022) {From Geometry to Topology: Inverse
  Theorems for Distributed Persistence}. In: Goaoc X, Kerber M (eds) 38th
  International Symposium on Computational Geometry (SoCG 2022), Leibniz
  International Proceedings in Informatics (LIPIcs), vol 224. Schloss Dagstuhl
  -- Leibniz-Zentrum f{\"u}r Informatik, Dagstuhl, Germany, pp 61:1--61:16,
  \doi{10.4230/LIPIcs.SoCG.2022.61},
  \urlprefix\url{https://drops.dagstuhl.de/opus/volltexte/2022/16069}

\bibitem[{Solomon et~al(2021{\natexlab{b}})Solomon, Wagner, and
  Bendich}]{solomon2021fast}
Solomon Y, Wagner A, Bendich P (2021{\natexlab{b}}) A fast and robust method
  for global topological functional optimization. In: International Conference
  on Artificial Intelligence and Statistics, PMLR, pp 109--117

\bibitem[{Suzuki et~al(2021)Suzuki, Miyazawa, Minto, Tsuji, Obayashi, Hiraoka,
  and Ito}]{suzuki2021flow}
Suzuki A, Miyazawa M, Minto JM, et~al (2021) Flow estimation solely from image
  data through persistent homology analysis. Scientific reports 11(1):1--13

\bibitem[{Tauzin et~al(2021)Tauzin, Lupo, Tunstall, P{\'e}rez, Caorsi,
  Medina-Mardones, Dassatti, and Hess}]{tauzin2021giotto}
Tauzin G, Lupo U, Tunstall L, et~al (2021) giotto-tda:: A topological data
  analysis toolkit for machine learning and data exploration. J Mach Learn Res
  22(39):1--6

\bibitem[{Turner et~al(2014)Turner, Mukherjee, and
  Boyer}]{turner2014persistent}
Turner K, Mukherjee S, Boyer DM (2014) Persistent homology transform for
  modeling shapes and surfaces. Information and Inference: A Journal of the IMA
  3(4):310--344

\bibitem[{Villain(1991)}]{villain1991continuum}
Villain J (1991) Continuum models of crystal growth from atomic beams with and
  without desorption. Journal de physique I 1(1):19--42

\bibitem[{Villani(2021)}]{villani2021topics}
Villani C (2021) Topics in optimal transportation, vol~58. American
  Mathematical Soc.

\bibitem[{Wagner(2021)}]{wagner2021nonembeddability}
Wagner A (2021) Nonembeddability of persistence diagrams with p> 2 wasserstein
  metric. Proceedings of the American Mathematical Society 149(6):2673--2677

\bibitem[{Wolf(1991)}]{wolf1991kinetic}
Wolf DE (1991) Kinetic roughening of vicinal surfaces. Physical review letters
  67(13):1783

\end{thebibliography}


\end{document}